\newcommand{\calK}{\mathcal{K}}
\newcommand{\Rpn}{\R_{0}^{+}}
\newcommand{\C}{\mathbb{C}}
\newcommand{\N}{\mathbb{N}}
\newcommand{\R}{\mathbb{R}}
\newcommand{\Rp}{\R_{+}}
\renewcommand{\Re}{\operatorname{Re}}
\newcommand{\ee}{\operatorname{e}}
\newcommand{\vertiii}[1]{{\left\vert\kern-0.25ex\left\vert\kern-0.25ex\left\vert #1 
    \right\vert\kern-0.25ex\right\vert\kern-0.25ex\right\vert}}
\def\text{\textrm}
 \newtheorem{theorem}{Theorem}[section]
 \newtheorem{proposition}[theorem]{Proposition}
 \theoremstyle{definition}
 \newtheorem{definition}[theorem]{Definition}
 \theoremstyle{remark}
 \newtheorem{rem}[theorem]{Remark}
 \newtheorem{example}[theorem]{Example}
 \numberwithin{equation}{section}
\newcommand{\Reg}{\operatorname{Reg}}
\begin{document}

\title[]{Input-to-state stability for parabolic boundary control: Linear and Semi-linear systems}

\date{August 17, 2019}

\author[F.L.~Schwenninger]{Felix L.~Schwenninger}
\address{Department of Applied Mathematics, University of Twente, P.O.~Box 217, 
7500 AE Enschede, 
The Netherlands and \newline Department of Mathematics, Center for Optimization and Approximation, University of Hamburg, 
Bundesstr. 55, 20146 Hamburg, Germany}
\email{f.l.schwenninger@utwente.nl}
%}

\begin{abstract}
Input-to-state stability (ISS) for systems described by partial differential equations has seen intensified research activity recently, and in particular the class of boundary control systems, for which truly infinite-dimensional effects enter the situation. This note reviews input-to-state stability for parabolic equations with respect to general $L^{p}$-input-norms in the linear case and includes extensions of recent results on semilinear equations.
\end{abstract}

\keywords{
Input-to-state Stability, Boundary Control, Parabolic Equation, Semilinear Equation}
 
\subjclass[2010]{93C20,  35K58, 47D06, 93D20, 93C05,}

\maketitle

\section{Introduction}

In the study of control of partial differential equations two main types of inputs can be distinguished: {\it distributed} and {\it boundary} inputs (or disturbances or controls). The latter emerge  e.g.\ by the following reason: Although a system is described by an infinite-dimensional state space, the ability to influence the system may only be possible  through an ``infinitesimal small number'' of states. As a simple motivating example consider a metal rod of length $1$ whose temperature flux at both boundary point is subject to control. Neglecting the width of the rod and normalizing parameters, the heat distribution may be governed by the following equations,
\begin{equation}\label{eq0:heateq1D}
\left\{\begin{split}
\frac{\partial x}{\partial t}(\xi,t)={}&\frac{\partial^{2}x}{\partial \xi^{2}}(\xi,t)-ax(\xi,t),&(\xi,t)\in[0,1]\times(0,\infty),\\
\frac{\partial x}{\partial\xi}(0,t)={}&\frac{\partial x}{\partial\xi}(1,t)=u(t),&t\in(0,\infty),\\
x(\xi,0)={}&x_{0}(\xi),
\end{split}\right.
\end{equation}
where $a>0$. In this setting input-to-state stability (ISS) can be understood as follows: The given `data' of the model is the initial temperature distribution $x_{0}$ and the \emph{input function} $u:\R\to U=\R$, representing the temperature flux at the boundary points. 
%Whether this data is interpreted as control variables or disturbances does not matter. 
Since it is (physically) clear that the system is causal, that is, the solution $x$ at time $t$ does not depend on the values of the function at later values, we may ask for an estimate on the (norm of the) {\it state} $x$ at time $t$ depending on (the norms of) $u|_{[0,t]}$ and $x_{0}$. In particular, if we choose for the \emph{state space} $X=L^{2}[0,1]$, we could aim for the following type of time-space estimate for solutions to the differential equation:
\begin{equation}\label{eq0:iss}
\|x(t)\|_{X}\lesssim \ee^{-t\omega_{0}}\|x_{0}\|_{X}+\|u\|_{L^{q}(0,t)},
\end{equation}
for some fixed $q\in[1,\infty]$, $\omega_{0}>0$ and all $x_{0}\in L^{2}(0,1)$,  $u\in L^{q}(0,t)$ and $t>0$. In this (linear) case, we call the system $L^{q}$\emph{-input-to-state stable} (ISS), and in fact, the above system is $L^{q}$-ISS for the parameters $q\in(4/3,\infty]$ and $\omega_{0}\in(0,a\pi^{2}]$, see Example \ref{ex3:heatNeumann}. \newline
 In the literature, the most commonly studied ISS property is with respect to $L^{\infty}$-functions. Clearly, the notion of ``solution'' is ambiguous here, and we shall, for simplicity, confine ourselves in this introduction to classical solutions of the pde with sufficiently smooth input functions $u$. \newline Since the above example is a linear system, Estimate \eqref{eq0:iss} clearly superposes the uniform global asymptotic stability of the \emph{ internal system}, that is the dependence of $x(t)$ on $x_{0}$ in the case that $u\equiv0$, and the \emph{external stability}, that  is, the stability of $u\mapsto x(t)$ when $x_{0}=0$. This combination of stability notions lies at the heart of ISS and has proved very useful particularly for nonlinear ODE systems where this superposition principle does not hold. For a detailed overview on why this concept has become a practical tool in systems and control theory we refer to \cite{Sontag08}. Note that the linear pde case is more subtle compared to the rather trivial linear finite-dimensional situation: Imagine for instance a simple space-discretization of the above heat equation which leads to a system of the form
\begin{equation}\label{eq0:AB}
\dot{\tilde{x}}(t)=A\tilde{x}(t)+Bu(t), \quad \tilde{x}(0)=\tilde{x}_{0},
\end{equation}
where $\tilde{x}$ is vector-valued and $A, B$ are matrices of appropriate dimensions. By the variation-of-constants formula the spatially-discrete system is $L^{q}$-ISS for any $q\in[1,\infty]$ if and only if $A$ is Hurwitz. If more generally \eqref{eq0:AB} describes a system with $A$ being the generator of a strongly continuous semigroup on the (possibly infinite-dimensional) state space $X$ and $B:U\to X$ being a bounded linear operator, the corresponding assertion, that the semigroup is exponentially stable if and only if the system is $L^{q}$-ISS remains valid. Comparing this to the afore-mentioned range of $q\in (4/3,\infty]$ for which the heat equation with Neumann control at the boundary is $L^{q}$-ISS, Example \ref{ex3:heatNeumann}, reveals fundamental differences between systems of the form \eqref{eq0:AB} (with bounded input operator $B$) and the ones with boundary control. So {\it what goes wrong?\footnote{\cite{MiroWirt16a}, A.~Mironchenko and F.~Wirth.
{\it Restatements of input-to-state stability in infinite dimensions: what
  goes wrong?}
\newblock In {\em Proc. of the 22th International Symposium on Mathematical
  Theory of Networks and Systems}, pages 667--674, 2016.}}. Apparently, \eqref{eq0:heateq1D} does not fit into the framework of \eqref{eq0:AB} with \emph{bounded} $B$. Instead \eqref{eq0:heateq1D} is of the abstract form
\begin{equation}\label{eq0:BCS}
\left\{\begin{split}
\dot{x}(t)={}&\mathfrak{A}x(t),&t>0,\\
\mathfrak{B}x(t)={}&u(t),&t>0,
\\ \quad x(0)={}&x_{0}, 
\end{split}
\right.
\end{equation}
where both $\mathfrak{A}$ and $\mathfrak{B}$ are unbounded operators --- we will elaborate on the precise assumptions in Section 2. Such systems have become known as \emph{boundary control sytems}. Whereas it is formally clear that our example fits into the setting of \eqref{eq0:BCS} rather than into \eqref{eq0:AB}, it is a little less clear how ISS estimates can be assessed in this case (or to discuss existence of solutions, to begin with). However, there is a way of interpreting a boundary control system as a variant of \eqref{eq0:AB}. Although this is rather well-known in the operator theorists in systems theory, the explicit argument will be recalled in Section 2, also revealing the natural connection to weak formulations from pde's. This is also done in order to place approaches and results that were recently obtained for ISS together with more classic --- but sometimes a bit folklore --- results known in the literature.

Whereas these different view-points for linear systems are often rather subject to taste or one's background --- however, the amount of effort for obtaining ISS results may differ greatly, not only because solution concepts are intimately linked with the approach --- they (can) become crucial when considering systems governed by nonlinear pde's. In line with the introductory 1D-heat equation, one may be interested in the following semilinear system
\begin{equation}\label{eq0:nonlinearheat}
\left\{\begin{split}
\frac{\partial x}{\partial t}(\xi,t)={}&\frac{\partial^{2}x}{\partial \xi^{2}}(\xi,t)+f(x(t,\xi)),&(\xi,t)\in[0,1]\times(0,\infty)\\
\frac{\partial x}{\partial\xi}(0,t)={}&\frac{\partial x}{\partial\xi}(1,t)=u(t)&t\in(0,\infty)\\
x(\xi,0)={}&0.
\end{split}\right.
\end{equation}
where $f$ is e.g.\ of the form $f(x)=-x-x^{3}$.  In general, to account for  nonlinearities, the aimed ISS estimate has to be adapted to an inequality of the more general form 
\begin{equation}\label{eq0:iss2}
\|x(t)\|_{X}\lesssim \beta(\|x_{0}\|_{X},t)+\gamma(\|u\|_{L^{q}(0,t)}),
\end{equation}
where $\beta\in\mathcal{KL}$ and $\gamma\in\mathcal{K}_{\infty}$ being classical comparison functions from Lyapunov theory,
\begin{align*} 
\calK ={}& \{\mu:\Rpn\rightarrow \Rpn \:|\: \mu(0)=0,\, \mu\text{ continuous, }  \text{strictly increasing}\},\\
 \calK_\infty ={}& \{\gamma\in\calK \:|\:  \lim_{x\to\infty} \gamma(x)=\infty\},\\
\mathcal{L}={}&\{\theta:\Rpn\rightarrow\Rpn\:|\:\gamma \text{ continuous, } \text{strictly decreasing,} \lim_{t\to\infty}\theta(t)=0 \},\\
\mathcal{KL} = {}&\{ \beta:(\Rpn)^{2}\rightarrow\Rpn\: | \: \beta(\cdot,t)\in\calK\ \forall t \text{ and }\beta(s,\cdot)\in\mathcal{L}\ \forall s\}.
\end{align*}
 
Of course, even in the uncontrolled setting $u(t)\equiv0$, equation \eqref{eq0:nonlinearheat} is more delicate to deal with than a linear equation, both in terms of existence of solutions as well as asymptotic behaviour, but well-known \cite{Pazy83,Henry}. In particular, the ``sign'' of $f$ may be crucial for  the existence of global solutions, which is necessary for ISS. Regarding ISS, we now have typical nonlinear effects (for which ISS was originally studied for ODEs \cite{sontagISS}) blended with infinite-dimensional effects (through both the heat diffusion and the boundary control).

Recently, several steps have been made to address ISS for semilinear systems, for both distributed and boundary control, e.g.\ \cite{GuivLogeOpme19,MazePrie12,KaraKrstMiro19,ZhenZhu17,ZhenZhu18} and the references in Section \ref{sec1.1}. The employed methods are diverse --- see the section paragraph --- and it seems that a unified approach for more general systems is missing and open problems remain. In the following we try to offer yet another approach to the ISS for parabolic semilinear equations from a mere functional-analytic point of view. This, though linked in spirit  with \cite{ZhenZhu17}, generalizes to more equations of the form
\begin{equation}\label{eq0:nonlinearBCS}
\left\{\begin{split}
\dot{x}(t)={}&\mathfrak{A}x(t)+f(t,x(t)),&t>0,\\
\mathfrak{B}x(t)={}&u(t),&t>0,\\ \quad x(0)={}&x_{0}.
\end{split}
\right.
\end{equation}
 Before we summarize on the state-of-the-art in the literature, let us identify the crucial tasks in identifying ISS for a parabolic system of the form \eqref{eq0:nonlinearBCS}:
\begin{enumerate}[label=(\Roman*)]
\item Global existence (and uniqueness) of solutions to \eqref{eq0:nonlinearBCS} for $u$ in the considered function class;
\item Uniform global asymptotic stability of the undisturbed system, $u\equiv0$;
\item The  $L^{q}$-ISS estimate, \eqref{eq0:iss2}.
\end{enumerate}
The first task is classical in the study of (parabolic) pde's and is typically approached by local fix-point arguments and iteratively extending the solutions to a maximal interval and a-posteriori regularity investigations. The second step, sometimes phrased by the `geometric properties' of an evolution equation in the pde literature, is dealt with differently than in (I); with methods, such as Lyapunov functions, carefully adjusted from the finite-dimensional theory. The final step (III) is closely connected to (II) and, at least in the situations studied in the literature so far, can often be accessed by weaker arguments than the ones in (II). In particular, a local (in time) version of estimate  \eqref{eq0:iss2} does in general not suffice to guarantee global solutions. However, after having settled global existence, in Section \ref{sec4} we shall see relatively simple Lyapunov arguments which are sufficient for ISS.

This note has two goals: First and foremost we would like to survey on recent developments that fall under the concept of ISS for boundary-controlled (parabolic) evolution equations: This is done with particular care at those instances where the literature has seen results in similar spirit, but emerging from different approaches. An example of such an instance is the use of the notion of {\it admissible operators} which is classic in infinite-dimensional systems theory, but comes along with quite an operator-theoretic `flavour' compared to (direct) pde arguments. We will avoid the notion of ``admissibility'' throughout this manuscript as it is, in case of uniformly globally asymptotically stable linear systems, equivalent to ISS, \cite{JacoNabiPartSchw18}. Thus admissibility in the context of ISS is rather ``another name'' than an additional property, which, for linear equations, can be used interchangeably. 
By this, we hope to contribute to clarify on some things that may be folklore knowledge in one community, while possibly unknown in others. The author strongly believes that the fact that ISS for pde's is currently studied by view-points from different fields, such as operator theory, systems theory and control of pde's, has and has had a very positive effect on the topic. 
 Apart from this survey-character, the article 
slightly extends recent findings around ISS for semilinear equations, in particular the ones in \cite{ZhenZhu17}. This includes the goal to unify some of the approaches from the literature and or to reveal common features and difficulties. We emphasize that in contrast to the introductory example and several results in the literature, we will not restrict ourselves to spatially one-dimensional systems in the following. Thereby we hope to set the ground for coming efforts in the study of ISS estimates for pde systems, which even in the semilinear parabolic case are by far not completed. 

What this note does not cover is the link to a profound application of ISS. Instead we confine ourselves to some of the --- as we believe --- mathematical essentials and refer to the literature for important topics such as ISS feedback redesign and ISS small-gain theorems, which have had great success in finite-dimensional theory. Furthermore, ISS Lyapunov functions --- interesting from both the application and the general theory --- for which even the linear case is not completely understood yet, see \cite{JMPW18} for an interesting partial result, will not be discussed here in detail \footnote{at least not explicitely.}.

Altogether we hope to address with this article both experts in ISS for infinite-dimensional systems as well as researchers new to the field. This intention has also led to the style of the presentation which is chosen in a way that, the author hopes, is more intuitive than a plain arrangement of definitions and results. 
%may differ from a typical publication. 
Like in the introduction, we will try to stick closely to some tutorial examples and develop/recap the ISS theory around them. This also means that some of the results of Section \ref{sec4} should rather be seen as a first step (or better second step after what has already been done in the literature) far from being settled conclusively. We will point out such incomplete situations and comment on difficulties. For example, one of these seems to be $L^{q}$-ISS for semilinear parabolic equations with Dirichlet boundary control, where, to the best of the authors knowledge, so far only the case $q=\infty$ has partially been resolved \cite{KaraKrstMiro19,ZhenLhacZhu18,ZhenZhu18}.

\subsection{ISS for parabolic semilinear systems --- what is known}\label{sec1.1}

As mentioned before the notion of ISS in the context of pde's has only been studied in the last ten years. However, particularly for linear systems, several results had previously been known --- at least implicitly --- by other notions arising in the control of pde's or boundary value problems. For example, for linear systems  $L^{q}$-ISS is equivalent to uniform global asymptotic stability together with $L^{q}$-admissibility --- the latter property being particularly satisfied if distributed controls are considered, see \cite{DashMiro13,JacoNabiPartSchw18,MiroWirt18a}. Therefore, classical results for $L^{2}$-admissibility, e.g.\ \cite{TucsWeis09} and $L^{q}$-admissibility, $q\in[1,\infty)$ e.g.\ \cite{HaakKuns06,Staf05,Weiss89ii}, can be applied to derive ISS for linear systems. Recall that $q\in \{1,\infty\}$ are special choices for linear systems: Whereas $q=1$ can practically only arise for distributed controls \cite{Weiss89ii}, the case $q=\infty$ is implied by any other $L^{p}$-ISS estimate with $p<\infty$. By now there are several results for general linear, not necessarily parabolic, systems for distributed and boundary control, see e.g. \cite{ArgoWitrPrie12,DashMiro10,DashMiro13,KaraKrst19book,JacoNabiPartSchw18,MiroWirt18a,MiroWirt18b} and the references therein. \newline
 In the following we concentrate on works that focus on parabolic equations. 
The assessment for particular parabolic equations, both linear and semilinear, has been studied by several authors. In \cite{DashMiro10,DashMiro13,MiroIto15,MazePrie12} (coercive) ISS-Lyapunov functions are constructed for semilinear parabolic equations with distributed control. In these references, spatially one-dimensional equations are considered with the diffusion term being the Laplacian and primarily $L^{\infty}$-ISS is shown with input functions being continuous or piecewise continuous. Boundary control (or mixed boundary and distributed control) for parabolic equations has been studied in \cite{JacoNabiPartSchw18,JacoSchwZwar19,KaraKrst16a,KaraKrst17,KaraKrstMiro19,LhacSausZhuShor19,LhacShor19,ZhenZhu17,ZhenZhu18b,ZhenZhu18}: More precisely, in \cite{KaraKrst16a,KaraKrst17} $L^{\infty}$-ISS estimates for classical solutions were proved for spatially one-dimensional linear parabolic equations where $\mathfrak{A}$ referred to a regular Sturm--Liouville differential operator and with controls acting through general Robin boundary conditions \footnote{Here, ``Robin boundary conditions'' includes Dirichlet and Neumann boundary conditions.}. The proof technique rested on a careful analysis of the solutions represented via the spectral decomposition, available in this case. In \cite[Sec.~4]{JacoNabiPartSchw18} general Riesz-spectral operators were considered and more general ISS estimates. Recently, another abstract extension of \cite{KaraKrst16a,KaraKrst17} to Riesz-spectral boundary control systems has been given in \cite{LhacShor19}, also for generalized solutions  and more generally, continuous inputs.  The assumptions used in these works, which particularly include that the differential operators have discrete spectra, are not required in \cite{JacoSchwZwar19}, where a very general class of linear parabolic equations and  inputs in $L^{\infty}$ are considered, see Theorem \ref{sec3:thm} below.  Note that all these references require finite-dimensional input spaces.\newline  Semilinear diffusion equations (with constant diffusion coefficient) in one spatial coordinate have appeared in \cite{ZhenZhu17,ZhenZhu18,ZhenZhu18b} with different scenarios of boundary control. In particular, it is shown in \cite{ZhenZhu17} that Robin boundary control which is not Dirichlet control allows for $L^{q}$-ISS estimates, $q\in[2,\infty]$, under sufficient assumptions on $f$ in order to guarantee global existence of classical solutions. We will revisit these results in the present paper and show how they generalizes to more general differential operators on higher-dimensional spatial domains. In \cite{KaraKrstMiro19} maximum principles  and their compatibility with monotonicity are used to assess $L^{\infty}$-ISS for a broad class of semilinear parabolic equations with Dirichlet boundary control and infinite-dimensional input spaces. Dirichlet control has also appeared in \cite{ZhenZhu18,ZhenZhu18b} for a viscous Burger's type equation, however with a technical assumptions on the $L^{\infty}$-norm of the input functions. We also mention a recent result in \cite[Prop.~4.1]{JMPW18} which establishes  $L^{\infty}$-ISS Lyapunov functions for parabolic boundary control problems  (and even a bit more general settings). Furthermore, we remark that also linear control systems with nonlinear (closed-loop) feedback law can be interpreted as semilinear control systems, e.g.\ \cite{Staf05}. In particular, we mention the extensive results for Lur'e systems in \cite{GuivLogeOpme19} and the prior work \cite{JayaLogeRyan08}. 
\subsection{Notation}
In the following let $\R$ and $\C$ denote the real and complex numbers  respectively and $\Rp=[0,\infty)$. The letters $X$ and $U$ will always refer to complex Banach spaces with norms $\|\cdot\|_{X}, \|\cdot\|_{U}$ where we omit the reference to the space whenever it is clear from the context. Let $I\subset[0,\infty)$ be a bounded interval. By $L^{p}(I;X)$, $p\in [1,\infty)$ we refer to the $X$-valued Lebesgue spaces of measurable, $p$-integrable functions $f:I\to X$, where the Bochner integral is used to define the vector-valued integrals. The space $W^{k,p}(I;X)\subset L^{p}(I;X)$ refers to the vector-valued Sobolev functions of order $k$. The space of essentially bounded $X$-functions is denoted by $L^{\infty}(I;X)$, the space of $X$-valued regulated functions by $\mathrm{Reg}(I;X)$ --- the closure of the step functions and the space of continuous functions by $C(I;X)$ all equipped with their natural (essential) supremum norms. Furthermore, $C^{k}(I;X)$ refers to the space of $k$-times continuously differentiable functions $f:I\to X$. By $C_{c}^{\infty}(I;X)$ we refer to the functions which are $k$-times differentiable for any $k>0$ and compactly supported in $I$. If $\mathcal{Z}(I;X)$ refers to one of the defined function spaces, then $\mathcal{Z}_{\mathrm{loc}}(\Rp;X)$ denotes the space of functions $f:\Rp\to X$ such that the restriction $f|_{I}:I\to X$ lies in $\mathcal{Z}(I;X)$ for all compact subintervals $I\subset\Rp$. We will also identify the restriction $f_{I}$ with its zero extension to $\R$ or $\Rp$. For a Banach space $Y$ let $\mathcal{L}(X,Y)$ denote the space of bounded linear operators from $X$ to $Y$. We assume the reader is familiar with  basics from strongly continuous semigroups (or ``$C_{0}$-semigroups) for which we refer to the textbooks \cite{CurtZwar95,Pazy83,Staf05,TucsWeis09}. Typically we will denote a semigroup by $T$ and its generator by $A$. The growth bound of $T$ will be denoted by $\omega_{A}$. For a Hilbert space $X$ the scalar product will be denoted by $\langle\cdot,\cdot\rangle$ and for a densely defined, closed operator $A$ on $X$, let $A^{*}$ denote the Hilbert space adjoint. \newline
The notation ``$F(x)\lesssim G(x)$'' means  that there exists a constant $C>0$, which is independent of the involved variable $x$, such that $F(x)\leq CG(x)$.

\section{A recap on ISS for linear boundary control systems}\label{sec2}
Intuitively, and in particular if one has a certain class of systems in mind, it is rather straight-forward how input-to-state stability for pde's should be defined in order to generalize the finite-dimensional theory. However, as various solution concepts such as weak, mild and strong solutions for infinite-dimensional systems exist, the following abstract definition in the language of dynamical systems seems to be natural for what we need in the following, see \cite{DashMiro13,KaraZian11} and the references therein, for similar notions in the context of ISS which have motivated the following.
\begin{definition}[Dynamical control systems]
Let $X$ and $U$ be a Banach spaces. 
Let $D\subset X\times {U}^{\Rp}$ and let $\Phi:\R_{+}\times D\to X$ be a function satisfying the following properties for any $t,h\in \R_{+}$, $(x,u),(x,u')\in D$,
% $x\in X_{0}$ and $u,u'\in\mathcal{U}$
\begin{enumerate}[label=(\roman*)]
	\item $\Phi(0,x,u)=x$,
	\item $(\Phi(t,x,u),u(t+\cdot))\in D$ and $\Phi(t+h,x,u)=\Phi(h,\Phi(t,x,u),u(t+\cdot))$,
	\item $(x,u|_{[0,t]})\in D$ and $u|_{[0,t]}=u'|_{[0,t]}$ implies that $\Phi(t,x,u)=\Phi(t,x,u')$.
\end{enumerate}
The mapping $\Phi$ is called \emph{semiflow} and 
\begin{itemize}
\item $X$  the \emph{state space},
\item  $U$ the \emph{input space}, 
\item $D(\Phi):=D$ the space of \emph{input data},
%\item  $D_{X}(\Phi)=\{x\in X\colon (x,0)\in D(\Phi)\}$ the \emph{(space of) initial values},
\item  $D_{X}(\Phi)=\{x\in X\colon \exists u \text{ such that }(x,u)\in D(\Phi)\}$  the  \emph{initial values},
\item  $D_{U}(\Phi)=\{u\in \R_{+}^{U}\colon \exists x \text{ such that }(x,u)\in D(\Phi)\}$  the  \emph{input functions}.
\end{itemize}
The triple $(X,U,\Phi)$ is called a \emph{dynamical control system}.
\end{definition}
Note that for linear systems it is often possible to ``separate'' $D(\Phi)$ in the sense that $D(\Phi)=D_{X}(\Phi)\times D_{U}(\Phi)$. However, in the case of $\Phi$ referring to the semiflow arising from the classical solutions of a boundary control system --- even in the linear case --- this is not true. 
\begin{rem}
It is debatable whether the definition of a dynamic control system (as we decided to call it here) should include any continuity assumptions on the flow. For example, as a `minimal' property, one could require that $t\mapsto \Phi(t,x,u)$ is continuous for any $(x,u)\in D(\Phi)$, as suggested e.g.\ in \cite{KaraKrstMiro19}. This condition sounds reasonable in most concrete situations involving the solution concept of the pde. However, we remark that checking this property may not be trivially satisfied even in the context of linear ISS with respect to inputs from $L^{\infty}$, see \cite{JacoNabiPartSchw18,JacoSchwZwar19}. As mentioned, several abstract settings have been introduced in the literature and the assumptions vary from one to the other. We do not  claim that our definition is more suitable than others, but it seems to be reasonable for our needs.
\end{rem}
\begin{definition}[ISS of dynamical control system]
Let $(X,U,\Phi)$ be a dynamical control system and let $q\in[1,\infty]$.
We say that the dynamical control system is \emph{$L^{q}$-input-to-state stable, $L^{q}$-ISS,} if there exist functions $\beta\in \mathcal{KL}$ and $\gamma\in \mathcal{K}$ such that
\begin{equation}
\label{eq1:iss}
	\|\Phi(t,x,u)\|_{X}\leq \beta(\|x_{0}\|_{X},t)+\gamma(\|u|_{[0,t]}\|_{{L}^{q}(0,t;U)})
\end{equation}
for all $t>0$, $(x_{0},u)\in D(\Phi)\cap (X\times  L_{\mathrm{loc}}^{q}(0,\infty;U))$.

\end{definition}
More `exotic' norms other than $L^{q}$ can be considered in the study of ISS. For instance, Orlicz spaces, a generalization of $L^{p}$-spaces, appear naturally when studying \emph{integral input-to-state stability}, a variant of ISS \cite{JacoNabiPartSchw18,NabiSchw18,JacoSchwZwar19}.
We remark that in the above definition one could more generally refrain from the completeness of the spaces $X$ and $U$. 
%Furthermore, the set of input functions $\mathcal{U}$ could depend on the initial value $x$, which, e.g., is considered in \cite{KaraKrstMiro19}.
It is also important to keep in mind that the definition of an input-to-state stable dynamical control systems requires the global existence of solutions in time, known as `forward-completeness' of the function $\Phi$. 
Infinite-dimensional examples of dynamical control systems that are ISS can readily be given by means of linear pde systems with distributed control.

\begin{example}\label{ex1}
 Let $A$ be the generator of a strongly continuous semigroup on $X$ and $B:U\to X$ be a bounded operator. It is well-known, see e.g.\ \cite[Prop.~4.2.10]{TucsWeis09} that for any $x_{0}\in D(A)$ and $u\in W_{\mathrm{loc}}^{1,1}(\R_{+};U)$ there exists a classical solution $x:[0,\infty)\to X$ to the abstract linear equation 
 \begin{align}\label{eq1:ABbdd}
 \dot{x}={}&Ax(t)+Bu(t),\quad t>0\\
 x(0)={}&x_{0} 
 \end{align}
and by the (abstract) variation-of-constants formula, 
\begin{equation}\label{eq1:VoC}
	x(t)=T(t)x_{0}+\int_{0}^{t}T(t-s)Bu(s)\,\mathrm{ds}
\end{equation}
one sees that $(X,U,\Phi)$ with $\Phi(t,x_{0},u)=x(t)$ and $D(\Phi)=D(A)\times W^{1,1}_{\mathrm{loc}}(\R_{+};U)$, where $x$ denotes the classical solution for $x_{0}\in D(A)$, is a dynamical control system which is $L^{p}$-ISS for any $p\in[1,\infty]$ if and only if $A$ generates an exponentially stable semigroup, see e.g.\ \cite[Proposition 2.10]{JacoNabiPartSchw18}. On the other hand, if we `define' a solution only by \eqref{eq1:VoC}, which is possible for any $x_{0}\in X$ and $u\in L_{\mathrm{loc}}^{1}(\R_{+};U)$, we have that $(X,U,\Psi)$ is an $L^{p}$-ISS dynamical control system, $p\in [1,\infty]$, with semiflow $\Psi(t,x_{0},u)$ defined as the left-hand-side of \eqref{eq1:VoC} and $D(\Phi)=X\times L_{\mathrm{loc}}^{1}(\R_{+};U)$, if and only if $A$ generates an exponentially stable semigroup.  \\
For instance, this can be applied to show that that the following system is $L^{p}$-ISS for any $p\in[1,\infty]$ with $X=U=L^{2}(\Omega)$, 
\begin{align*}
	\dot{x}(\xi,t)={}&\Delta x(\xi,t)-ax(\xi,t)+u(\xi,t),&(\xi,t)\in\Omega\times (0,\infty)\\
	\frac{\partial x}{\partial \nu}(\xi,t)={}&0&(\xi,t)\in\partial \Omega\times (0,\infty)\\
	x(\xi,0)={}&x_{0}(\xi), &\xi\in\Omega
\end{align*}
where $a>0$ and $\Delta$ denotes the Laplace operator on a domain $\Omega\in \R^{n}$ with smooth boundary.
\end{example}
In Example \ref{ex1} we have seen that for a linear system with distributed control the space of initial values $D_{X}(\Phi)$ can be chosen identical to $X$  provided that $\Phi$ was extended to a more general solution concept. In fact, the ISS estimate was only assessed from the variation-of-constants formula which is a hint that this integrated version of the pde is a more natural object to study ISS estimates (of course not only ISS estimates). However, as indicated in the introduction, the System \eqref{eq0:heateq1D} does not fit into the framework of Example \ref{ex1}. Before we present a work-around to this issue, let us formalize the type of system that \eqref{eq0:heateq1D} is representing.
\begin{definition}[Linear boundary control system] Let $X$ and $U$ be Banach spaces and $\mathfrak{A}:D(\mathfrak{A})\subset X\to X$ and $\mathfrak{B}:D(\mathfrak{A})\to U$ be closed operators such that
\begin{enumerate}
\item $\mathfrak{A}|_{\ker\mathfrak{B}}$ generates a $C_{0}$-semigroup on $X$, and 
\item $\mathfrak{B}$ is right-invertible, i.e.\ there exists $B_{0}\in \mathcal{L}(U,D(\mathfrak{A}))$ with $\mathfrak{B}B_{0}=id_{U}$.
\end{enumerate}
Here and in the following, we equip $D(\mathfrak{A})$ with the graph norm $\|\cdot\|_{\mathfrak{A}}:=\|\cdot\|_{X}+\left\|\mathfrak{A}\cdot\right\|_{X}$.
Then we call both the pair $(\mathfrak{A},\mathfrak{B})$ and the formally associated set of equations
\begin{equation}\label{eq1:BCS}
\left\{\begin{split}
\dot{x}(t)={}&\mathfrak{A}x(t),&t>0,\\
\mathfrak{B}x(t)={}&u(t),&t>0,\\ \quad x(0)={}&x_{0}\in X, 
\end{split}
\right.
\end{equation}
 a (linear) \emph{boundary control system}. Given a continuous function $u:[0,\infty)\to U$ and $x_{0}\in X$, a function $x:[0,\infty)\to X$ is called a \emph{classical solution} of the boundary control system if $x\in C^{1} ([0,\infty);X)\cap C([0,\infty);D(\mathfrak{A}))$ and $x$ satisfies \eqref{eq1:BCS} pointwise.
\end{definition}
Note that the definition of a classical solution implies that $\mathfrak{B}x_{0}=u(0)$.
Let us now provide an argument for the $L^{q}$-ISS Estimate \eqref{eq0:iss} for the linear heat equation, \eqref{eq0:heateq1D}, stated in the introduction. Assume that $x:[0,\infty)\to X$ is a classical solution to the pde satisfying the boundary condition for some continuous function $u:[0,\infty)\to U$. Integration by parts then readily yields
\begin{align*}
	\frac{1}{2}\frac{\mathrm{d}}{\mathrm{d}t}\|x(t)\|_{L^{2}(0,1)}^{2}={}&\Re\langle x(t),\dot{x}(t)\rangle\\
		={}&\Re\langle x(t),\frac{\partial^{2}}{\partial \xi^{2}}x(t)-ax(t)\rangle\\
		%={}&-\int_{0}^{1}\frac{\partial }{\partial \xi}x(t)\overline{\frac{\partial }{\partial \xi}x(t)}-a|x(t)|^{2}\\
		={}&-\left\|\frac{\partial }{\partial \xi}x(t)\right\|_{L^{2}(0,1)}^{2}-a\|x(t)\|_{L^{2}(0,1)}^{2}+\Re\left(x(\xi,t)\overline{u(t)}|_{\xi=0}^{\xi=1}\right)\\
		\leq{}&-\left\|\frac{\partial }{\partial \xi}x(t)\right\|_{L^{2}(0,1)}^{2}-a\|x(t)\|_{L^{2}(0,1)}^{2}+\varepsilon\|x(t)\|_{H^{1}(0,1)}^{2}+\frac{C}{\varepsilon}|u(t)|^{2},
\end{align*}
where in the last step we used the fact that the boundary trace is a continuous linear operator from the Sobolev space $H^{1}(0,1)$ to $\C^{2}$ and where $C>0$ is some absolute constant. Therefore, by Gronwall's lemma, we conclude that for any $\omega<a$  there exists $\tilde{C}>0$ such that
\begin{equation*}
	\|x(t)\|_{L^{2}(0,1)}^{2}\leq \ee^{-\omega t}\|x_{0}\|_{L^{2}(0,1)}^{2}+\tilde{C}\int_{0}^{t}\ee^{-a(t-s)}|u(s)|^{2}\,\mathrm{d}s
\end{equation*}
and thus, by H\"older's inequality, $L^{q}$-ISS, \eqref{eq0:iss}, for any $q\in[2,\infty]$ follows. Note that an argument in this spirit has been applied in \cite{ZhenZhu17} to assess ISS even for a class of semilinear one-dimensional heat equations, provided that ``the nonlinearity behaves well'' in the above estimates --- we will be more explicit on that in Section \ref{sec4}. Let us make a few remarks on this proof: Although eventually $L^{q}$-ISS is derived for $q\in[2,\infty]$, it is essential for the argument to bound the term involving $u(t)$ such that the resulting $x(t)$ is bounded in the $H^{1}$-norm squared and consequently derive an implicit inequality in $\|x(t)\|_{L^{2}(0,1)}$. However, the result is not sharp. In fact, the considered controlled heat equation \eqref{eq0:heateq1D} is $L^{q}$-ISS for all $q\in(4/3,\infty]$. To see  this, we will rewrite the boundary control system such that  an explicit solution representation of the form \eqref{eq1:VoC} as in the distributed case can be used. Here the defining properties of a boundary control system are essential. This transformation is  a well-known technique for operator theorists in systems theory \cite{TucsWeis09,Staf05}, but appears to be a type of folklore result that is hard to find explicitly in the literature. What can be found more easily, e.g.\ in \cite{CurtZwar95}, is the so-called Fattorini trick which rewrites the boundary control system into a linear system of the form \eqref{eq1:ABbdd} with bounded operator $B$ at the price that the new input is the derivative of the initial $u$. As we are interested in $L^{q}$-estimates of the input $u$, this is undesirable. This can be overcome by an additional step: To show that this is a natural view-point, we briefly lay-out the `general Fattorini trick' in the following. Recall that the assumptions made in the definition of a boundary control system are intimately linked with semigroups and thus with \eqref{eq1:VoC}. 
%However, we recall that this solution concept coincides with the one of {\it weak solutions} which is more commonly used in pdes.

Let $(\mathfrak{A},\mathfrak{B})$ be a boundary control system. Denote by $T$ the semigroup generated by $A:=\mathfrak{A}|_{\ker\mathfrak{B}}$ and let $B_{0}:U\to D(\mathfrak{A})$ be a right-inverse of $\mathfrak{B}$. A simple calculation shows that for continuously differentiable $u:[0,\infty)\to U$ and a classical solution $x$ to \eqref{eq1:BCS}, the function $z=x-B_{0}u$ solves the following differential equation
\begin{align}\label{eq1:inhomog}
	\dot{z}(t)=Az(t)+\mathfrak{A}B_{0}u(t)-B_{0}\dot{u}(t), \quad z(0)=x_{0}-B_{0}u(0),
\end{align}
in the classical sense. Note in particular that by the defining properties of $B_{0}$ we have that $x-B_{0}u\in D(A)$ if and only if $x\in D(\mathfrak{A})$ and $\mathfrak{B}x=u$. This simple reformulation, however, paves the way to derive an equation that again only depends on $u$ and not on $\dot{u}$. For that consider the representation of the solution to the inhomogeneous equation \eqref{eq1:inhomog},
\begin{equation}\label{eq1:mild1}
	z(t)=T(t)(x_{0}-B_{0}u_{0})+\int_{0}^{t}T(t-s)\mathfrak{A}B_{0}u(s)\,\mathrm{d}s -\int_{0}^{t}T(t-s)B_{0}\dot{u}(s)\,\mathrm{d}s.
\end{equation}
Note that $\mathfrak{A}B_{0}\in\mathcal{L}(U,X)$ so that the first term is well-defined even for any $u\in L_{\mathrm{loc}}^{1}(\R_{+};U)$. The second term is also well-defined, even for functions $u\in W_{\mathrm{loc}}^{1,1}(\R_{+};U)$. In order to get rid of the term $\dot{u}$ we want to (formally) integrate the second term by parts. To do so, an extension of the semigroup to a larger space $X_{-1}$ is considered. This is done to make sure that $t\mapsto T(t)x$ is differentiable for $x\in X$. For some $\lambda\in \C$ in the resolvent set of the generator $A$, $X_{-1}$ is defined to the completion of the space $X$ with respect to the norm $\|(\lambda I-A)^{-1}\cdot\|$ which is independent of $\lambda$. The semigroup uniquely extends to a strongly continuous semigroup $T_{-1}(t)$ on $X_{-1}$ with the generator $A_{-1}$ being an extension of $A$ with $D(A_{-1})=X$. For this standard procedure to define $X_{-1}$, we refer to  \cite{Pazy83,Staf05,TucsWeis09}. Thus, \eqref{eq1:mild1} and particular the integrals can be viewed in the larger space $X_{-1}$. Therefore, integration by parts
%\footnote{in fact, this follows from an operator variant of the product rule}
 yields
\begin{equation*}
	\int_{0}^{t}T(t-s)B_{0}\dot{u}(s)\,\mathrm{d}s=\int_{0}^{t}T_{-1}(t-s)A_{-1}B_{0}{u}(s)\,\mathrm{d}s+B_{0}u(t)-T(t)B_{0}u(0).
\end{equation*}
Inserting this in \eqref{eq1:mild1} and transforming back to $x$ gives
\begin{equation}\label{eq1:mild}
	x(t)=T(t)x_{0}+\int_{0}^{t}T_{-1}(t-s)\left[\mathfrak{A}B_{0}-A_{-1}B_{0}\right]u(s)\mathrm{d}s.
\end{equation}
We emphasize that the integral will in general only exist as a limit in $X_{-1}$ whereas its value happens to be an element of $X$ for any $t>0$ by our assumption that $x$ is a classical solution to the boundary control system. Also note that $A_{-1}B_{0}\in \mathcal{L}(U,X_{-1})$ and that $x-B_{0}\in D(A)$ is in turn equivalent to $A_{-1}x+[\mathfrak{A}B_{0}-A_{-1}B_{0}]u\in X$. All this leads to the definition of mild solutions.
\begin{definition}[mild solutions of boundary control systems]\label{def:mild}
Let $(\mathfrak{A},\mathfrak{B})$ be a boundary control system with state space $X$ and input space $U$. Let $T$ denote the semigroup generated by $A:=\mathfrak{A}|_{\ker\mathfrak{B}}$ and $B_{0}$ be a right-inverse of $\mathfrak{B}$. Let $x_{0}\in X$ and $u\in L_{\mathrm{loc}}^{1}(\R_{+};U)$. If the function $x:[0,\infty)\to X_{-1}$ defined in  \eqref{eq1:mild} takes values only in $X$, i.e., $x(t)\in X$ for all $t>0$, and $x$ is continuous from $[0,\infty)$ to $X$, then  $x$ is called a \emph{(continuous) mild solution} of \eqref{eq1:BCS}.
\end{definition}
\begin{rem}
We want to point that in the literature the notion of a mild solution may be defined in a more general way. E.g.\ in \cite{JacoNabiPartSchw18}  an arbitrary function $x:[0,\infty)\to X_{-1}$ defined by \eqref{eq1:mild} is called a mild solution, without any assumption on the range of $x$ and its continuity. Since $B\in\mathcal{L}(U,X_{-1})$, any such function will however be continuous in the weaker norm of $X_{-1}$. The assumption that a mild solution should be $X$-valued is rather natural --- not least as one models a differential equation by choosing for a norm/space initially --- and so is the continuity (in $X$). While the first one is necessary for ISS, the second (continuity) could be dropped, if we would be interested in minimal a-priori requirements for ISS estimates. However, we will see shortly that for linear systems the continuity is implicit if the system is $L^{q}$-ISS for $q<\infty$, and also for $q=\infty$, if only continuous input functions are considered, see below and \cite{JacoNabiPartSchw18,LhacShor19,TucsWeis09}. 
\end{rem}
The following properties of mild solutions corresponding to boundary control systems are well-known and can for instance be found in \cite[Chapter 11]{TucsWeis09} (in the case of Hilbert spaces). The proofs extend to the general Banach space setting in a straight-forward way, see also \cite{Staf05}. Note that in the literature there exists slightly different versions of the definition of abstract boundary control systems, e.g.\ in \cite{Grei87}. 
\begin{proposition}\label{sec2:prop1} Let $(\mathfrak{A},\mathfrak{B})$ be a boundary control system with associated operators $A$ and $B_{0}$. Let $x_{0}\in X$ and $u\in L_{\mathrm{loc}}^{1}(0,\infty;U)$. Then the following assertions holds.
\begin{enumerate}
%\item for any $x_{0}$ and $u\in L_{\mathrm{loc}}^{1}(\R_{+};U)$ t
\item Any continuous mild solution $x$ \eqref{eq1:mild} solves the equations
%Any function $x\in C([0,\infty);X_{-1})\cap L_{\mathrm{loc}}^{1}([0,\infty),X)$ satisfying the equation
\begin{equation}\label{eq1:mild2}
	x(t)-x(0)=\int_{0}^{t}A_{-1}x(s)+\left[\mathfrak{A}B_{0}-A_{-1}B_{0}\right]u(s)\,\mathrm{d}s\quad t>0,
\end{equation}
where equality is understood in $X_{-1}$. Conversely, any $x\in C([0,\infty);X)$ satisfying \eqref{eq1:mild2} in $X_{-1}$ is of the form \eqref{eq1:mild} with $x_{0}=x(0)$. %satisfies \eqref{eq1:mild} as equation in $X_{-1}$. Mild solutions of $(\mathfrak{A},\mathfrak{B})$ are uniquely determined by $x_{0}$ and $u$;

\item The operator $B=\mathfrak{A}B_{0}-A_{-1}B_{0}$ is uniquely determined by the boundary control system and does not depend on the chosen right-inverse $B_{0}$ of $\mathfrak{B}$;
\item If $x_{0}\in D(\mathfrak{A})$ and $u\in W^{2,1}(\R_{+};U)$ such that $\mathfrak{B}x_{0}=u(0)$, then there exists a unique classical solution to \eqref{eq1:BCS} given by \eqref{eq1:mild};
\item $\mathfrak{A}=A_{-1}|_{D(\mathfrak{A})}+B\mathfrak{B}$ where $B=\mathfrak{A}B_{0}-A_{-1}B_{0}\in \mathcal{L}(U;X_{-1})$;
\end{enumerate}
\end{proposition}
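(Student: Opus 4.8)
The plan is to treat (1) as the core and to obtain (2)--(4) from the structural identities $D(A)=\ker\mathfrak B$ and $\mathfrak A|_{D(A)}=A=A_{-1}|_{D(A)}$. For the forward implication in (1), since a continuous mild solution is by definition given by \eqref{eq1:mild}, I would integrate that formula in $X_{-1}$ and interchange the order of integration. The term $T(t)x_{0}$ is handled by the fundamental theorem of calculus for the extrapolated semigroup: as every element of $X=D(A_{-1})$ lies in the domain of the generator $A_{-1}$ of $T_{-1}$, we have $\int_{0}^{t}A_{-1}T_{-1}(\sigma)x_{0}\,\mathrm{d}\sigma=T_{-1}(t)x_{0}-x_{0}$ in $X_{-1}$. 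For the convolution part $v(t)=\int_{0}^{t}T_{-1}(t-s)Bu(s)\,\mathrm{d}s$, Fubini's theorem together with $A_{-1}\int_{0}^{\tau}T_{-1}(\rho)\,\mathrm{d}\rho=T_{-1}(\tau)-I$ on $X_{-1}$ yields $A_{-1}\int_{0}^{t}v(\sigma)\,\mathrm{d}\sigma=v(t)-\int_{0}^{t}Bu(s)\,\mathrm{d}s$; since $x$ is a mild solution, $v(\sigma)=x(\sigma)-T(\sigma)x_{0}\in X$, so the bounded operator $A_{-1}\colon X\to X_{-1}$ may be pulled inside the integral, and adding the two contributions gives \eqref{eq1:mild2}.

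For the converse in (1) I would apply the resolvent $R:=(\lambda I-A_{-1})^{-1}\in\mathcal{L}(X_{-1},X)$ to \eqref{eq1:mild2}. Using $R|_{X}=(\lambda I-A)^{-1}$ and $RA_{-1}w=\lambda Rw-w$ for $w\in X$, one finds that $y:=Rx$ lies in $C([0,\infty);D(A))$ and satisfies $y(t)=y(0)+\int_{0}^{t}\bigl(Ay(s)+RBu(s)\bigr)\,\mathrm{d}s$ in $X$; that is, $y$ is a strong solution of $\dot y=Ay+RBu$ on $X$. A strong solution is in particular a mild solution --- differentiate $s\mapsto T(t-s)y(s)$ --- hence $y(t)=T(t)Rx(0)+\int_{0}^{t}T(t-s)RBu(s)\,\mathrm{d}s=R\bigl(T_{-1}(t)x(0)+\int_{0}^{t}T_{-1}(t-s)Bu(s)\,\mathrm{d}s\bigr)$, and injectivity of $R$ forces $x$ to coincide with the right-hand side of \eqref{eq1:mild}. (Alternatively, one may quote the standard characterisation of generalized mild solutions on $X_{-1}$, cf.\ \cite[Ch.~4]{TucsWeis09}.)

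Parts (2) and (4) are then short. If $B_{0},B_{0}'$ are two right-inverses of $\mathfrak B$, then $(B_{0}-B_{0}')v\in\ker\mathfrak B=D(A)$ for each $v\in U$, and since $\mathfrak A$ and $A_{-1}$ both restrict to $A$ there, $(\mathfrak AB_{0}-A_{-1}B_{0})-(\mathfrak AB_{0}'-A_{-1}B_{0}')=(\mathfrak A-A_{-1})(B_{0}-B_{0}')=0$, which is (2); boundedness $B\in\mathcal{L}(U,X_{-1})$ follows from $B_{0}\in\mathcal{L}(U,D(\mathfrak A))$, $\mathfrak A\in\mathcal{L}(D(\mathfrak A),X)$ and $A_{-1}\in\mathcal{L}(X,X_{-1})$. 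For (4), write $x\in D(\mathfrak A)$ as $x=w+B_{0}\mathfrak Bx$ with $w:=x-B_{0}\mathfrak Bx\in\ker\mathfrak B=D(A)$; then $\mathfrak Ax=Aw+\mathfrak AB_{0}\mathfrak Bx=A_{-1}x+(\mathfrak AB_{0}-A_{-1}B_{0})\mathfrak Bx=A_{-1}x+B\mathfrak Bx$. For (3) I would use the substitution $z=x-B_{0}u$ and equation \eqref{eq1:inhomog}: here $z(0)=x_{0}-B_{0}u(0)\in\ker\mathfrak B=D(A)$, and for $u\in W^{2,1}(\R_{+};U)$ the inhomogeneity $\mathfrak AB_{0}u-B_{0}\dot u$ lies in $W^{1,1}_{\mathrm{loc}}(\R_{+};X)$, so the classical theory of inhomogeneous abstract Cauchy problems (e.g.\ \cite[Ch.~4]{Pazy83}) provides a unique classical solution $z\in C^{1}([0,\infty);X)\cap C([0,\infty);D(A))$, given by variation of constants. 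Then $x=z+B_{0}u$ (note $W^{2,1}\hookrightarrow C^{1}$, so $B_{0}u\in C^{1}([0,\infty);D(\mathfrak A))$) is, by a direct verification of $\dot x=Az+\mathfrak AB_{0}u=\mathfrak Ax$ and $\mathfrak Bx=u$, the unique classical solution of \eqref{eq1:BCS}, and integrating the variation-of-constants formula for $z$ by parts in $X_{-1}$ --- exactly the computation leading from \eqref{eq1:mild1} to \eqref{eq1:mild} --- identifies $x$ with \eqref{eq1:mild}.

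The step I expect to demand the most care is the bookkeeping in the extrapolation space $X_{-1}$ throughout (1): justifying each Fubini interchange, each passage of a bounded or closed operator under an integral sign, and the integration by parts --- and, in the converse, promoting an a priori merely $X_{-1}$-valued candidate to a genuine $X$-valued strong (hence mild) solution. This last point is precisely where the standing hypotheses that a mild solution be $X$-valued and continuous are used.
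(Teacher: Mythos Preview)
Your proposal is correct and provides a self-contained argument for a result that the paper simply delegates to references (\cite[Thm.~3.8.2]{Staf05} for part~(1) and \cite[Ch.~11]{TucsWeis09} for the remaining parts). The route you take---Fubini together with the closed-operator/Hille theorem in $X_{-1}$ for the forward direction of (1), the resolvent trick $y=(\lambda I-A_{-1})^{-1}x$ to reduce the converse to a bounded-inhomogeneity Cauchy problem on $X$, the decomposition $x=(x-B_{0}\mathfrak{B}x)+B_{0}\mathfrak{B}x$ with $x-B_{0}\mathfrak{B}x\in D(A)$ for (2) and (4), and the Fattorini substitution $z=x-B_{0}u$ combined with the classical $W^{1,1}$-inhomogeneity regularity result for (3)---is precisely the standard argument underlying those cited sources, so there is no substantive divergence in method, only in the level of detail you supply.
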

\begin{proof}
For the first item we refer to \cite[Theorem 3.8.2]{Staf05}. The rest can be found in \cite[Chapter 11]{TucsWeis09} upon the straight-forward adaption of proofs to general Banach spaces.
\end{proof}
In the case of Hilbert spaces (in fact, reflexive spaces suffice), we have several alternatives to characterize the operator $B$ as well as the mild solutions to a boundary control system. Note that with this one could in principle avoid the space $X_{-1}$.
\begin{proposition}\label{prop:weak}
Let the assumptions of Proposition \ref{sec2:prop1} hold and additionally assume that $X$ and $U$ are Hilbert spaces. Then the following assertions hold.
\begin{enumerate}
\item\label{1prop2} If $X$ and $U$ are Hilbert spaces, then 
$$\langle \mathfrak{A}x,\psi\rangle-\langle x,A^{*}\psi\rangle=\langle\mathfrak{B}x,B^{*}\psi\rangle\quad \forall x\in D(\mathfrak{A}), \psi\in D(A^{*}).$$

\item A continuous function $x:[0,\infty)\to X$ is a mild solution of the form \eqref{eq1:mild} if and only if it is a (weak/strong) solution in one of the following senses:
\begin{enumerate}
	\item \label{2prop2}for all $v\in D(A^{*})$ it holds that $\langle v,x(\cdot)\rangle$ is absolutely continuous and
	$$\frac{d}{dt}\langle v,x(t)\rangle=\langle x(t),A^{*}v\rangle+\langle v,\mathfrak{A}B_{0}u(t)\rangle-\langle A^{*}v,B_{0}u(t)\rangle$$
	holds for almost every $t\geq0$ and $x(0)=x_{0}$.
	\item \label{3prop2} for all $T>0$ and all $z\in C([0,T];D(A^{*}))\cap C^{1}([0,T];X)$ with $z(T)=0$ it holds that
	$$\langle z(0),x_{0}\rangle-\int_{0}^{T}\langle \dot{z}(t),x(t)\rangle \mathrm{dt}=\int_{0}^{T}\langle A^{*}z(t),x(t)\rangle+\langle z(t),\mathfrak{A}B_{0}u(t)\rangle-\langle A^{*}z(t),B_{0}u(t)\rangle\mathrm{d}t.$$
	\item \label{4prop2}$x\in W_{\mathrm{loc}}^{1,1}([0,\infty);X_{-1})$, $x(0)=x_{0}$ and $$\dot{x}(t)=A_{-1}x(t)+\left[\mathfrak{A}B_{0}-A_{-1}B_{0}\right]u(t)$$ holds in $X_{-1}$ for almost every $t\geq0$.
\end{enumerate}
\end{enumerate}
\begin{proof}
Assertion \eqref{1prop2} follows directly from \ref{sec2:prop1}, see also \cite[Remark 10.1.6]{TucsWeis09}. That the solution concept \eqref{2prop2} is equivalent to the one of a mild solution  readily follows from \eqref{eq1:mild2} in Proposition \ref{sec2:prop1} and the fundamental theorem of calculus for the Lebesgue integral, see also \cite[Remark 4.1.2]{TucsWeis09}. Also recall the duality of $X_{-1}$ and $D(A^{*})$ (see e.g.\ \cite[Prop.~2.10.2]{TucsWeis09}).\newline  Similarly, \eqref{eq1:mild2} shows the equivalence with \eqref{3prop2} by the fundamental theorem of calculus for vector-valued functions (see e.g.\ \cite{ArenBattHiebNeub11} and note that $X$ possesses the Radon--Nikodym property) and again using the duality of $X_{-1}$ and $D(A^{*})$. \newline 
To see that  \eqref{4prop2} implies \eqref{3prop2} note first that
the function $t\mapsto \langle z(t),x(t)\rangle$ is differentiable for a.e.\ $t$ and
\begin{align*}
\frac{\partial }{\partial t}\langle z(t),x(t)\rangle=&{}\langle \dot{z}(t),x(t)\rangle +\langle z(t),A_{-1}x(t)+\left[\mathfrak{A}B_{0}-A_{-1}B_{0}\right]u(t)\rangle_{D(A^{*})\times X_{-1}}\\
={}&\langle \dot{z}(t),x(t)\rangle + \langle A^{*}z(t),x(t)-B_{0}u(t)\rangle+\langle z(t),\mathfrak{A}B_{0}u(t)\rangle
\end{align*}
and thus, by integrating, $x$ satisfies the identity in \eqref{3prop2} 
for all $z\in C([0,T];D(A^{*}))\cap C^{1}([0,T];X)$ with $z(T)=0$. Conversely, assume that $x$ satisfies the condition in \eqref{3prop2} and consider $z(t)=v\tilde{z}(t)$, with $v\in D(A^{*})$, $\tilde{z}\in C^{1}([0,T];\C)$ and $\tilde{z}(T)=0$. It readily follows by the definition of the scalar-valued weak derivative and the characterization of scalar-valued Sobolev functions $W^{1,1}$ that 
$$\langle x(T),v\rangle-\langle x_{0},v\rangle=\int_{0}^{T}\langle x(t)-B_{0}u(t),A^{*}v\rangle+\langle v,\mathfrak{A}B_{0}u(t)\rangle\mathrm{d}t$$ holds. Thus, $\langle x(0),v\rangle=\langle x(\cdot),v\rangle(0)=\langle x_{0},v\rangle$ for all $v\in D(A^{*})$. Thus, by density, $x(0)=x_{0}$ and hence, \eqref{2prop2} holds. For a similar proof showing that mild solutions are weak solutions in the sense of \eqref{3prop2} see e.g. \cite[p.~631-632]{CurtZwar95} (there, however, only bounded $B$'s are considered).
  \end{proof}

\end{proposition}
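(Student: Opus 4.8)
The plan is to reduce both assertions to two facts already at hand: the integral identity \eqref{eq1:mild2} characterising mild solutions (Proposition~\ref{sec2:prop1}\,(1)) and the factorisation $\mathfrak{A}=A_{-1}|_{D(\mathfrak{A})}+B\mathfrak{B}$ with $B=\mathfrak{A}B_{0}-A_{-1}B_{0}\in\mathcal{L}(U;X_{-1})$ (Proposition~\ref{sec2:prop1}\,(4)). The only genuinely new ingredient, specific to the Hilbert-space setting, is the duality $(X_{-1})'\cong D(A^{*})$ (with norm $\|(\beta I-A^{*})\cdot\|$ for any $\beta\in\rho(A^{*})$), under which the pairing $\langle\cdot,\cdot\rangle_{D(A^{*})\times X_{-1}}$ extends the scalar product on $D(A^{*})\times X$, together with the observation that $D(A^{*})$ separates the points of $X_{-1}$.

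For assertion~\eqref{1prop2} I would fix $x\in D(\mathfrak{A})$ and $\psi\in D(A^{*})$ and pair the identity $\mathfrak{A}x=A_{-1}x+B\mathfrak{B}x$, valid in $X_{-1}$, with $\psi$. Since $A_{-1}$ extends $A$ and the extended pairing is set up precisely so that $\langle A_{-1}w,\psi\rangle=\langle w,A^{*}\psi\rangle$ for $w\in X$ (extending $\langle Aw,\psi\rangle=\langle w,A^{*}\psi\rangle$ on $D(A)$), the first term contributes $\langle x,A^{*}\psi\rangle$. For the boundary term, $B\in\mathcal{L}(U;X_{-1})$ has Hilbert-space adjoint $B^{*}\in\mathcal{L}\big((X_{-1})';U'\big)=\mathcal{L}(D(A^{*});U)$, whence $\langle B\mathfrak{B}x,\psi\rangle=\langle\mathfrak{B}x,B^{*}\psi\rangle_{U}$; as $\mathfrak{A}x\in X$, the left-hand side is the genuine $X$-scalar product and \eqref{1prop2} follows.

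For assertion~\eqref{2prop2}, Proposition~\ref{sec2:prop1}\,(1) says that a continuous $x\colon[0,\infty)\to X$ is a mild solution exactly when $x(0)=x_{0}$ and \eqref{eq1:mild2} holds in $X_{-1}$, so it suffices to show that each of \eqref{2prop2}, \eqref{3prop2}, \eqref{4prop2} is equivalent to \eqref{eq1:mild2}. The equivalence with \eqref{4prop2} is the vector-valued fundamental theorem of calculus in the Hilbert (hence Radon--Nikodym) space $X_{-1}$, applied to the $L^{1}_{\mathrm{loc}}$-map $s\mapsto A_{-1}x(s)+Bu(s)$. For \eqref{2prop2} I would test \eqref{eq1:mild2} against $v\in D(A^{*})$; using $\langle v,A_{-1}w\rangle=\langle A^{*}v,w\rangle$ for $w\in X$ and the defining relation $B=\mathfrak{A}B_{0}-A_{-1}B_{0}$ to rewrite the integrand, and then the scalar fundamental theorem of calculus, one obtains absolute continuity of $t\mapsto\langle v,x(t)\rangle$ and the stated a.e.\ derivative; conversely, since $D(A^{*})$ separates $X_{-1}$, running the computation backwards for all $v$ recovers \eqref{eq1:mild2} (and $x(0)=x_{0}$ is read off directly). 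For \eqref{3prop2}, in the forward direction I would apply the mixed Leibniz rule to $t\mapsto\langle z(t),x(t)\rangle_{D(A^{*})\times X_{-1}}$ for a test function $z\in C^{1}([0,T];X)\cap C([0,T];D(A^{*}))$ with $z(T)=0$ --- legitimate because \eqref{eq1:mild2} makes $x$ absolutely continuous into $X_{-1}$ with a.e.\ derivative $A_{-1}x+Bu$ --- then rewrite $\langle z(t),A_{-1}x(t)+Bu(t)\rangle$ via \eqref{1prop2}, integrate over $[0,T]$ and use $z(T)=0$; conversely, taking $z(t)=\tilde z(t)v$ with scalar $\tilde z\in C^{1}([0,T];\C)$, $\tilde z(T)=0$, and $v\in D(A^{*})$ reduces \eqref{3prop2} to the $W^{1,1}$-characterisation of $t\mapsto\langle v,x(t)\rangle$, i.e.\ to \eqref{2prop2}, with $x(0)=x_{0}$ recovered by taking $\tilde z(0)\neq0$.

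I expect the main obstacle to be bookkeeping rather than anything deep: keeping the three pairings --- the $X$-scalar product, the $X_{-1}\times D(A^{*})$ duality extending it, and the $U$-scalar product accessed through $B^{*}\colon D(A^{*})\to U$ --- mutually consistent and correctly oriented, and in particular justifying the mixed Leibniz rule $\tfrac{\mathrm{d}}{\mathrm{d}t}\langle z(t),x(t)\rangle=\langle\dot z(t),x(t)\rangle+\langle z(t),\dot x(t)\rangle_{D(A^{*})\times X_{-1}}$ in the situation where only $x$ is weakly differentiable, and only in the larger space $X_{-1}$, while $z$ is classically differentiable in $X$. Everything else is a routine application of the fundamental theorem of calculus, in its scalar form and in its vector-valued form (using that Hilbert spaces have the Radon--Nikodym property), combined with the identities already collected in Proposition~\ref{sec2:prop1}.
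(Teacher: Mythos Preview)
Your proposal is correct and follows essentially the same route as the paper: both reduce everything to the integral identity \eqref{eq1:mild2} from Proposition~\ref{sec2:prop1}, the factorisation $\mathfrak{A}=A_{-1}|_{D(\mathfrak{A})}+B\mathfrak{B}$, the duality $X_{-1}\cong D(A^{*})'$, and the fundamental theorem of calculus in its scalar and vector-valued (Radon--Nikodym) forms; the Leibniz-rule argument for \eqref{4prop2}$\Rightarrow$\eqref{3prop2} and the separable test functions $z(t)=\tilde z(t)v$ for \eqref{3prop2}$\Rightarrow$\eqref{2prop2} match the paper exactly. Your organisation is slightly tidier in that you show each of \eqref{2prop2}, \eqref{3prop2}, \eqref{4prop2} equivalent to \eqref{eq1:mild2} directly, whereas the paper mixes direct equivalences with the chain \eqref{4prop2}$\Rightarrow$\eqref{3prop2}$\Rightarrow$\eqref{2prop2}, but this is only a presentational difference.
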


\begin{rem}\label{rem1}
\begin{enumerate}
\item In \cite{LhacShor19} ISS estimates for boundary control systems are shown for continuous weak solutions in the sense of (\ref{3prop2}) of Proposition \ref{prop:weak}. There it is also shown that for smooth inputs, this definition of weak solutions coincides with solutions of the form \eqref{eq1:mild1}. In fact, as Proposition \ref{prop:weak} shows, the notions of a mild solution as introduced in Definition \ref{def:mild}, weak solutions of the form \eqref{2prop2}, \eqref{3prop2} and a ``strong solution'' \eqref{4prop2} are all equivalent provided we assume continuity. Note that the definitions of weak solutions have the advantage that they do not refer to the space $X_{-1}$.
\item  It is easy to see that the definition of classical and mild solutions can be adapted to more general boundary control systems of the form
\begin{equation}\label{eq1:BCSB}
\left\{\begin{split}
\dot{x}(t)={}&\mathfrak{A}x(t)+\tilde{B}u_{1}(t),&t>0,\\
\mathfrak{B}x(t)={}&u_{2}(t),&t>0,\\ \quad x(0)={}&x_{0}, 
\end{split}
\right.
\end{equation}
where $U_{1}$ is a Banach space, $\tilde{B}\in\mathcal{L}(U_{1},X)$ and $u_{1}:\Rp\to U_{1}$ account for some distributed control.
\item
Comparing the form of a mild solution \eqref{eq1:mild} with the usual variation-of-constants formula suggests to view a boundary control system as a special case of a system of the form 
\begin{equation}\label{eq1:AB}
	\dot{x}(t)=A_{-1}x(t)+Bu(t),\quad t>0, x(0)=x_{0}\in X,
\end{equation}
where the differential equation is understood in the larger space $X_{-1}$ for $B\in \mathcal{L}(U,X_{-1})$. Clearly, for any $x_{0}$ and $u\in L_{\mathrm{loc}}^{1}(\Rp;U)$ this equation has a unique ``mild'' solution $x:[0,\infty)\to X_{-1}$. Definition \ref{def:mild} of a mild solution for a boundary system now additionally requires that such $x$ maps indeed to $X$. Also note that this setting as the advantage that systems of the form \eqref{eq1:BCSB} are automatically encoded in that form. Conversely, if we are given a system of the form \eqref{eq1:AB} with a semigroup generator $A$ and $B\in \mathcal{L}(U,X_{-1})$, it is always possible to find operators $\mathfrak{A}:D(\mathfrak{A})\to X$, $\mathfrak{B}:D(\mathfrak{A})\to U$ and $\tilde{B}:U\to X$ so that  we have a boundary control system as in \eqref{eq1:BCSB} with $A=\mathfrak{A}|_{\ker\mathfrak{B}}$ and $B=(\mathfrak{A}-A)B_{0}$ for some (all) right-inverses $B_{0}$ of $\mathfrak{B}$. This result, in the case that $B$ is injective, can be found in \cite{Sala87}. The non-injective case can be seen upon considering the quotient space $\tilde{U}=U/\ker{B}$. In conclusion, the study of boundary control systems rather than systems \eqref{eq1:AB} is not a restriction. 
\end{enumerate}
\end{rem}
So far we have encountered --- having in mind the equivalence of Proposition \ref{prop:weak} ---  two types of solutions for boundary control systems: \emph{classical} and, more generally, \emph{mild} solutions. The use of the latter is also motivated by the fact that the objects in the ISS estimate naturally only require initial values to be in $X$ and input functions in $L^{q}$ (or the respective functions space). However, for linear systems, this choice is less `conceptual' than rather a technicality, as the following results shows. Note that the case of ISS with respect to continuous functions has already appeared in \cite{LhacShor19} (where weak solutions haven been considered instead of mild solutions). In the view of systems $(A,B)$ of the form \eqref{eq1:AB}, the following result is a simple consequence of the linearity and the density of the involved functions spaces.

\begin{proposition}[ISS w.r.t.\ different solution concepts]\label{sec2:propsol} 
Let $(\mathfrak{A},\mathfrak{B})$ be a boundary control system on a Banach space $X$ with associated operators $B_{0}$, $A$ and $B$. %Let $\mathcal{Z}\in\{L^p,\Reg, C\}$, 
Let $q\in[1,\infty)$ and let $\Phi_{\mathrm{classic}}$ and $\Phi_{\mathrm{mild}}$ refer to the semiflow defined by the classical and mild solutions, respectively.Then the following assertion are equivalent
\begin{enumerate}
\item $(X,U,\Phi_{\mathrm{classic}})$ with $D(\Phi_{\mathrm{classic}})=D(A)\times C_{c}^{\infty}(0,\infty;U)$   is $L^{q}$-ISS; 

\item $(X,U,\Phi_{\mathrm{mild}})$ with $D(\Phi_{\mathrm{mild}})=X\times L_{\mathrm{loc}}^{q}([0,\infty);U)$ is $L^{q}$-ISS; 
\item $(X,U,\Phi_{\mathrm{classic}})$ with $D(\Phi_{\mathrm{classic}})=\{(x,u)\in D(\mathfrak{A})\times W_{\mathrm{loc}}^{2,1}(0,\infty;U)\colon \mathfrak{B}x=u(0)\}$   is $L^{q}$-ISS; 

%\item $(X,\mathcal{Z}_{\mathrm{loc}}([0,\infty);U), \Phi_{\mathrm{mild}})$ is $L^{q}$-ISS
%\item $(X,D,\Phi_{\mathrm{classic}})$ is $\mathcal{Z}$-ISS
\end{enumerate}
If $q=\infty$, then the following assertions are equivalent.
\begin{enumerate}
\item $(X,U,\Phi_{\mathrm{classic}})$ with $D(\Phi_{\mathrm{classic}})=\{(x,u)\in D(\mathfrak{A})\times W_{\mathrm{loc}}^{2,1}(0,\infty;U)\colon \mathfrak{B}x=u(0)\}$   is $L^{\infty}$-ISS; %where $\Phi_{\mathrm{classic}}$ refers to the semiflow defined by the classical solution of the boundary control problem.
\item $(X,U,\Phi_{\mathrm{mild}})$ with $D(\Phi_{\mathrm{mild}})=X\times C([0,\infty);U)$ is $L^{\infty}$-ISS; %where $\Phi_{\mathrm{mild}}$ refers to the semiflow defined by the mild solution of the boundary control problem.
%\item $(X,\mathcal{Z}_{\mathrm{loc}}([0,\infty);U), \Phi_{\mathrm{mild}})$ is $L^{q}$-ISS
%\item $(X,D,\Phi_{\mathrm{classic}})$ is $\mathcal{Z}$-ISS
\end{enumerate}
%In fact, $W_{\mathrm{loc}}^{2,1}(0,\infty;U)$ can be replaced by an
Note that the statements above particularly include that the considered dynamical control systems are well-defined.
\end{proposition}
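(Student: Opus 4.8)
The plan is to show all stated implications by exploiting the variation-of-constants representation \eqref{eq1:mild} together with linearity and density of the relevant function spaces, so that $L^q$-ISS for one solution concept extends to any other. The key observation is that the map $(x_0,u)\mapsto x(\cdot)$ defined by \eqref{eq1:mild} is linear in $(x_0,u)$, and that for a fixed time horizon $t>0$ the ISS estimate \eqref{eq1:iss} is, up to the comparison functions $\beta,\gamma$, an estimate of the value $x(t)\in X_{-1}$ against $\|x_0\|_X$ and $\|u|_{[0,t]}\|_{L^q}$; by homogeneity of linear maps this can be upgraded to an honest \emph{linear} bound of the form $\|x(t)\|_X\le M(t)(\|x_0\|_X+\|u|_{[0,t]}\|_{L^q})$ with $M$ locally bounded, which is the form that passes to limits cleanly. (This ``linearization of ISS'' step — that $\beta\in\mathcal{KL}$, $\gamma\in\mathcal{K}$ for a linear system forces exponential decay and a linear gain — is standard, cf.\ \cite[Prop.~2.10]{JacoNabiPartSchw18}, and I would quote it rather than reprove it.)

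For the case $q\in[1,\infty)$: The implication (3)$\Rightarrow$(1) is immediate since $C_c^\infty(0,\infty;U)\subset W^{2,1}_{\mathrm{loc}}(0,\infty;U)$ and functions in $C_c^\infty$ with $x_0\in D(A)\subset D(\mathfrak A)$ automatically satisfy the compatibility $\mathfrak B x_0=0=u(0)$, while by Proposition~\ref{sec2:prop1}(3) the classical solution is given by \eqref{eq1:mild}; so the data set in (1) is a subset of that in (3), and restricting an ISS estimate to a subset of data preserves it. For (1)$\Rightarrow$(2): given $(x_0,u)\in X\times L^q_{\mathrm{loc}}$, pick sequences $x_0^{(n)}\in D(A)$ with $x_0^{(n)}\to x_0$ in $X$ and $u^{(n)}\in C_c^\infty(0,\infty;U)$ with $u^{(n)}\to u$ in $L^q(0,t;U)$ for each fixed $t$ (density of $D(A)$ in $X$ and of $C_c^\infty$ in $L^q_{\mathrm{loc}}$, using $q<\infty$); by linearity $x^{(n)}(t)-x^{(m)}(t)$ is the classical solution with data $(x_0^{(n)}-x_0^{(m)},u^{(n)}-u^{(m)})$, so the linearized ISS estimate shows $(x^{(n)}(t))_n$ is Cauchy in $X$ uniformly on compact time intervals; the limit is continuous in $X$ and, since $B\in\mathcal L(U,X_{-1})$ makes $x^{(n)}(t)\to x(t)$ in $X_{-1}$ along the $X_{-1}$-continuous solution of \eqref{eq1:mild}, the two limits agree, so the mild solution with data $(x_0,u)$ is $X$-valued and continuous and inherits the (linearized, hence also the original) ISS estimate. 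Finally (2)$\Rightarrow$(3): the data set in (3) consists of pairs $(x_0,u)$ for which, by Proposition~\ref{sec2:prop1}(3), the classical solution equals the mild solution pointwise, so the estimate in (2) applies verbatim; one only needs to note the mild solution is then automatically $X$-valued and continuous, which Proposition~\ref{sec2:prop1}(3) also provides.

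For $q=\infty$ the same scheme works but density of $C_c^\infty$ in $L^\infty$ fails, which is exactly why the statement is restricted to \emph{continuous} inputs: now (2) uses $D(\Phi_{\mathrm{mild}})=X\times C([0,\infty);U)$, and $C_c^\infty(0,\infty;U)$ (or $W^{2,1}_{\mathrm{loc}}$ functions with $u(0)=\mathfrak Bx_0$, together with $D(\mathfrak A)$ dense in $X$) is dense in $C([0,T];U)$ in the sup norm, while $D(A)$ is dense in $X$; so the identical approximation argument, with $L^\infty(0,t;U)$-convergence replaced by uniform convergence on $[0,t]$, shows (1)$\Rightarrow$(2), and (2)$\Rightarrow$(1) again follows because on the data set in (1) the classical and mild solutions coincide. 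The well-definedness claims are byproducts: in each case the approximation argument produces an $X$-valued, $X$-continuous mild solution and hence a bona fide semiflow, and the cocycle/causality axioms (i)–(iii) for $\Phi$ follow from the semigroup property of $T$ and uniqueness of solutions to \eqref{eq1:mild}.

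I expect the main obstacle to be the $q=\infty$ density step done \emph{carefully}: one must produce, for a given continuous $u$ on $[0,T]$ and a given $x_0\in X$, approximants $(x_0^{(n)},u^{(n)})$ that lie in the \emph{classical}-solution data set — so $x_0^{(n)}\in D(\mathfrak A)$, $u^{(n)}\in W^{2,1}_{\mathrm{loc}}$, \emph{and} the compatibility $\mathfrak B x_0^{(n)}=u^{(n)}(0)$ — while simultaneously $x_0^{(n)}\to x_0$ in $X$ and $u^{(n)}\to u$ uniformly. This forces coupling the two approximations: e.g.\ first mollify $u$ to $u^{(n)}$, then choose $x_0^{(n)}\in D(\mathfrak A)$ with $x_0^{(n)}\to x_0$ and then \emph{correct} it by subtracting $B_0(\mathfrak B x_0^{(n)}-u^{(n)}(0))$, noting $\mathfrak B x_0^{(n)}-u^{(n)}(0)\to \mathfrak B x_0 - u(0)$ is small only if $\mathfrak B x_0 = u(0)$ — which need not hold for arbitrary data in (2)! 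The clean fix is to not insist the approximants satisfy compatibility but instead invoke Proposition~\ref{prop:weak}/Proposition~\ref{sec2:prop1}(1) to work with the integrated identity \eqref{eq1:mild2}, which is stable under $X$-limits of $x$ and $L^1$-limits of $u$ without any boundary-compatibility constraint, and only at the very end read off the classical-solution estimate on the restricted data set where compatibility does hold. Handling this subtlety — essentially, that the abstract $(A_{-1},B)$ viewpoint of Remark~\ref{rem1}(3) sidesteps the compatibility condition entirely — is where the real care is needed.
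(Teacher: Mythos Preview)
Your overall strategy matches the paper's: exploit linearity and density to pass the ISS estimate from classical to mild solutions, with the $q=\infty$ density step being the only genuine subtlety. The paper organizes this around the bounded linear operator $L_t:(x_0,u)\mapsto \Phi_{\mathrm{classic}}(\cdot,x_0,u)|_{[0,t]}$ with values in $C([0,t];X)$, which plays the same role as your ``linearized ISS estimate''; the easy implications are handled identically.

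However, your treatment of the $q=\infty$ density step is where you diverge, and it contains both an error and an unnecessary detour. You write that $\mathfrak{B}x_0^{(n)}-u^{(n)}(0)\to\mathfrak{B}x_0-u(0)$, but $x_0\in X$ is in general not in $D(\mathfrak{A})$, so $\mathfrak{B}x_0$ need not make sense; and even for $x_0^{(n)}\in D(\mathfrak{A})$ with $x_0^{(n)}\to x_0$ in $X$, there is no reason for $\mathfrak{B}x_0^{(n)}$ to converge, since $\mathfrak{B}$ is unbounded on $X$. Your proposed workaround via the integrated identity \eqref{eq1:mild2} is left vague, and it is not clear how it would produce the required $X$-valued, $X$-continuous limit without still performing an approximation in the classical data set.

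The paper's resolution is a one-line shift trick that makes the density immediate: given $x_0\in X$ and $u\in C([0,t];U)$, approximate $x_0-B_0u(0)$ (not $x_0$ itself) by a sequence $\tilde x_n\in D(A)=\ker\mathfrak{B}$, which is dense in $X$, then set $x_n:=\tilde x_n+B_0u(0)$. Since $B_0$ maps into $D(\mathfrak{A})$ and $\mathfrak{B}B_0=\mathrm{id}_U$, one obtains $x_n\in D(\mathfrak{A})$ with $\mathfrak{B}x_n=u(0)$ \emph{exactly}, and $x_n\to x_0$ in $X$. Then choose smooth $u_n$ with $u_n(0)=u(0)$ held fixed, approximating $u$ uniformly on $[0,t]$. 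This gives $(x_n,u_n)$ in the classical data set of (1) converging to $(x_0,u)$ in $X\times C([0,t];U)$, so the density holds after all and your Cauchy-sequence argument applies verbatim. No appeal to \eqref{eq1:mild2} or the $(A_{-1},B)$ viewpoint is needed.
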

\begin{proof}
Since classical solutions are mild solutions the implication (2) to (3) for $q<\infty$ and (2) to (1) when $q=\infty$ are clear.  Moreover, the implication (3) to (1) is trivial in the case $q<\infty$.  It remains to show (1) $\implies$ (2) in both regimes.\\
 Let $t>0$ be fixed and consider the operator
$$L_{t}:D(L_{t})\subset X\times L^{1}([0,t];U)\to C([0,t];X),\begin{bmatrix}x\\u\end{bmatrix}\mapsto\Phi_{\mathrm{classic}}(\cdot,x,u)|_{[0,t]}$$
with $D(L_{t})=\{(x,u|_{[0,t]})\colon (x,u)\in D(\Phi_{\mathrm{classic}})\}$. By  Proposition \ref{sec2:prop1}, $L_{t}$ is well-defined and the assumed ISS estimate together with linearity implies that $L_{t}$ is continuous with respect to the sum norm $\|x\|_{X}+\|u\|_{L^{q}(0,t;U)}$. Since classical solutions are mild solutions, Proposition \ref{sec2:prop1}, $L_t$ extends to an operator, again denoted by $L_{t}$, continuous from $D:=\{(x,u|_{[0,t]})\colon (x,u)\in D(\Phi_{\mathrm{mild}})\}$ to  $C([0,t];X_{-1})$. Consider now $q<\infty$. Thus $L_{t}$ is continuous even from  $D$ to  $C([0,t];X)$ since $D(A)\times C_{c}^{\infty}(0,\infty;U)$ lies dense in $D$ and since $X$ is continuously embedded in $X_{-1}$. For the case $q=\infty$, it may not be immediate why $D(L_{t})$ is dense in $D$. To see this, let $x\in X$ and $u\in C([0,t];U)$. Since $D(A)$ is dense in $X$, we find a sequence $(\tilde{x}_{n})_{n\ge0}$ in $D(A)=\ker\mathfrak{B}$ such that $\tilde{x}_{n}\to x-B_{0}u(0)$ for $n\to\infty$. Let $x_{n}=\tilde{x}_{n}+B_{0}u(0)$, $n\in\N$. Then $(x_{n},u)\in D(\Phi_{\mathrm{classic}})$ and $x_{n}\to x$ for $n\to\infty$. Now choose a sequence of smooth functions $u_{n}$ which satisfy $u_{n}=u(0)$ for all $n\in \N$ and approximate $u$ on $[0,t]$ in the supremum norm. It follows that $(x_{n},u_{n})\in D(\Phi_{\mathrm{classic}})$. Therefore, $L_{t}$ is continuous from $D$ to $C([0,t];X)$. From the representation \eqref{eq1:mild}, it follows that $\Phi_{\mathrm{mild}}(s,x,u)=\left(L_{t}(x,u)\right)(s)$ for any $s,t$ sucht that $s\leq t$ and $(x,u)\in D(\Phi_{\mathrm{mild}})$. 

Hence, in both cases, the continuity of the norms and the $\mathcal{KL}$, $\mathcal{K}$ functions directly gives the ISS estimates for $(X,U,\Phi_{\mathrm{mild}})$. 
%It remains to argue why the mild solutions are in fact continuous. Since the semigroup $T$ generated by $A$ is strongly continuous, it is easily seen that it suffices to show that $t\mapsto \Phi_{\mathrm{mild}}(t,0,u)$ is continuous for any $u$. 
 \end{proof}
\begin{rem}
%begin{enumerate}
%\item 
\begin{enumerate}
\item The result of Proposition \ref{sec2:propsol} remains true if one replaces $L^{q}$, $q<\infty$ by the Orlicz space $E_{\Phi}$ as defined in \cite{JacoNabiPartSchw18}, since $W^{2,1}(0,t;U)$ is dense in $E_{\Phi}(0,t;U)$.
\item The proof of Proposition \ref{sec2:propsol}  can also be easily given by viewing the boundary control system as a linear system of the form \eqref{eq1:AB}. This completely reduces to the fact that an operator is bounded if and only if it is bounded (with an explicit estimate) on a dense subspace. 
\item In the view of Proposition \ref{sec2:propsol}, one could also completely avoid the space $X_{-1}$ in the above considerations  and define generalized solutions for the case that $L^{q}$ estimates are known for the classical solutions. Then $\Phi$ can be defined as abstract extension of $\Phi_{\mathrm{classic}}$ on the space $X\times L_{\mathrm{loc}}^{q}(0,\infty;U)$ or $X\times C(0,\infty;U)$ respectively, in a similar way as followed in the proof. Such solutions concepts (which coincide in this case) are known as ``generalized solutions'' in the literature, see e.g.\ \cite{SchmZwar18}, \cite[Def.~4.2]{TucsWeis14}. 
\item The proof of Proposition \ref{sec2:propsol} also shows the following: Let $q<\infty$ (for the case $q=\infty$ see below) and  $(X,U,\Phi)$ be a dynamical control system for a boundary control system with $D(\Phi)\subseteq X\times L_{\mathrm{loc}}^{q}(0,t;U)$ with the property that it extends the dynamical control system given by the classical solutions $(X,U,\Phi_{\mathrm{classic}})$ in the following sense: 
\begin{itemize}
\item $\Phi(\cdot, x,u)\in C(0;\infty;X)$ for any $(x,u)\in D(\Phi)$,
\item $(x,u)\mapsto\Phi(t,x,u)$ is linear for any $t\in\Rp$,
\item $D(\Phi_{\mathrm{classic}})\subseteq D(\Phi)$,
\item $\Phi_{\mathrm{classic}}(t,x,u)=\Phi(t,x,u)$ for all $(t,x,u)\in \Rp\times D(\Phi_{\mathrm{classic}})$.
\end{itemize}
Then, it holds that 
$$\Phi=(\Phi_{\mathrm{mild}})|_{\Rp\times D(\Phi)}$$ if  $(X,U,\Phi_{\mathrm{classic}})$ is $L^{q}$-ISS. The same assertion holds for $q=\infty$ with the modification as in Proposition \ref{sec2:propsol}. As a side effect, this provides another proof that the weak solutions considered in \cite{LhacShor19} coincide with the mild definitions defined here, at least if the dynamical control system is $L^{q}$-ISS. Note that, by Proposition \ref{prop:weak}, this holds true even without any assumption on ISS. \newline
All of this shows that ISS estimates for \emph{continuous} input functions and linear systems do ultimately not rely on the ``solution concept'', but essentially only on the classical solutions, see \cite{LhacShor19} for a similar conclusion.
\item In contrast to the previous comment in this remark, we want to point out that if one aims to study $L^{\infty}$-ISS for input functions in $L_{\mathrm{loc}}^{\infty}(0,\infty;U)$ or the regulated functions $\Reg_{\mathrm{loc}}(0,\infty;U)$, then $L^\infty$-ISS estimates for the classical solutions are not sufficient. This issue is crucial as one may want to allow for non-continuous input-functions.
\end{enumerate}
%\item In the second part  of Proposition \ref{sec2:propsol} $\Reg$ cannot be replaced by $L^{\infty}$, see \cite[Example ??]{JaSJacoSchwZwa19cZw19}
%\end{enumerate}
\end{rem}
Above we have seen that the regularity of the boundary trace was the key to derive the $L^{2}$-ISS estimate in the case of the toy example heat equation with Neumann boundary control. In fact, this conclusion follows from the upcoming Proposition \ref{prop:analyticB}, which will also show that a better $L^q$-ISS estimate can be obtained. Before let us recap a few essentials about parabolic equations in the view of semigroup theory. Recall that a semigroup $T$ is called \emph{analytic} if $T$ can be extended analytically to an open sector $S_{\phi}=\{x\in\C\colon|\arg z|\leq\phi\}$, $\phi\in(0,\pi)$  and \emph{bounded analytic} if $T$ is bounded on $S_{\phi}$. An important characteristic of analytic semigroups is that $\operatorname{ran}T(t)\subset D(A)$ for all $t>0$ and
\begin{equation}\label{eq1:propA}
\sup_{t>0}t\ee^{-t\omega}\|AT(t)\|<\infty
\end{equation}
for $\omega>\omega_{A}$. We will now introduce interpolation spaces $X_{\alpha}$ for analytic semigroups. Note that there are several approaches to do so and we only touch the topic very briefly here. Let us without loss of generality assume that $\omega_{A}<0$. If $A$ generates an analytic semigroup, one can define the fractional power $(-A)^{-\alpha}:X\to X$ for any $\alpha\in(0,1)$ by the contour integral
$$(-A)^{-\alpha}=\int_{\partial S_{\phi'}}z^{-\alpha}(zI+A)^{-1}\mathrm{d}z,$$
where $\partial S_{\phi'}$ is the boundary of a sufficiently large sector $S_{\phi'}$ which particularly contains the spectrum of $A$. Since $(-A)^{-\alpha}$ is a bounded injective operator on $X$, one can further define $(-A)^{\alpha}=((-A)^{-\alpha})^{-1}:\operatorname{ran}(-A)^{-\alpha}\to X$. The domain of $(-A)^{\alpha}$ equipped with the graph norm is denoted by $X_{\alpha}$. %Analogously one can define the operators $(-A_{-1})^{\alpha}$ and the spaces $X_{-\alpha}:=(X_{-1})_{\alpha}$ by considering the semigroup $T_{-1}$ on the space $X_{-1}$.
Analogously to the space $X_{-1}$, we can define $X_{-\alpha}$ as the completion of $X$ with respect to the norm $\|(-A)^{-\alpha}\cdot\|$. The operator $(-A)^{-\alpha}$ extends uniquely to an isometric isomorphism from $X_{-\alpha}$ to $X$ which we denote again by $(-A)^{-\alpha}$. Its inverse is the unique bounded extension of $(-A)^{\alpha}$ from $X$ to $X_{-\alpha}$.
%as for the operator $A_{-1}$, the operator $(-A)^{-\alpha}:X\to X$ extends to an unbounded operator $(-A_{-1})^{-\alpha}$ on $X_{-1}$. Its domain equipped with the corresponding graph norm  is denoted by $X_{-\alpha}$. 
For reflexive spaces there is an equivalent view-point of the space $X_{-\alpha}$ as the dual space of the space $X_{\alpha}^{*}$ where $X_{\alpha}^{*}$ denotes the corresponding fractional space for the dual semigroup $T^{*}$ with generator $A^{*}$ and where duality is understood in the sense of the underlying pivot space $X$, see \cite[Chapter 3]{TucsWeis09} and \cite{Weiss89ii}. One of the many basic properties of these spaces are the following (continuous) inclusions, $$X_{-1}\supset X_{-\alpha}\supset X_{-\beta}\supset X\supset X_{\beta}\supset X_{\alpha}\supset X_{1},$$ where $0<\alpha<\beta<1$.
If the growth bound of the semigroup satisfies $\omega_{A}\ge 0$, the above construction can be performed for a suitably rescaled semigroup $\ee^{-t\omega}T(t)$ and it can be shown that $X_{\alpha}$ does not depend on the chosen $\omega>\omega_{A}$. For specific examples (for example when $A$ is the Laplacian with Dirichlet boundary conditions) these abstract spaces indeed reduce to well-known fractional Sobolev spaces, which is why $X_{\alpha}$ is sometimes called an ``abstract Sobolev space''. \newline In the spirit of \eqref{eq1:propA}, the fractional powers $(-A)^{\alpha}$ of a generator of an exponentially stable analytic semigroup satisfy $\operatorname{ran}T(t)\subset D(-A^{\alpha})$ for all $t>0$ and
$$\sup_{t>0}t^{\alpha}\ee^{-t\omega}\|(-A)^{\alpha}T(t)\|<\infty,$$
for any $\omega>\omega_{A}$. Moreover, it holds that $\operatorname{ran} T_{-1}(t)\subset D(A)$ for all $t>0$. For details on interpolation spaces for analytic semigroup generators we refer e.g.\ to \cite{engelnagel,Haase06,Pazy83}.

With these preparatory comments on analytic semigroups, we can prove the following sufficient condition for ISS.
\begin{proposition}[$L^{q}$-ISS for analytic semigroups]\label{prop:analyticB}
Let $(\mathfrak{A}, \mathfrak{B})$ be a boundary control system  on a Banach space $X$ with associated operators $A$ and $B_{0}$ and $B=\mathfrak{A}B_{0}-A_{-1}B_{0}$. 
Furthermore, assume that $A$ generates an exponentially stable analytic semigroup $T$ and that one of the following properties are satisfied for some $\alpha\in (0,1]$.
\begin{enumerate}[label=(\roman*)]
\item $B_{0}\in\mathcal{L}(U,X_{\alpha})$
\item $B\in\mathcal{L}(U,X_{-1+\alpha})$ 
\item $B^{*}\in \mathcal{L}(X_{1-\alpha}^{*},U^{*})$ and $X$ is reflexive\footnote{where $X_{\beta}^{*}$ denotes the dual space of $X_{-\beta}$ with respect to the pivot space $X$}.
\end{enumerate}
Then $(\mathfrak{A},\mathfrak{B})$ is $L^{q}$-ISS for $q\in(\alpha^{-1},\infty]$. More precisely, the dynamical control system $(X,U,\Phi_{\mathrm{mild}})$ is $L^{q}$-ISS for $D(\Phi_{\mathrm{mild}})=X\times L_{\mathrm{loc}}^{q}(0,\infty;U)$, where $\Phi_{\mathrm{mild}}(t,x_{0},u)$ refers to the mild solution $x(t)$ defined in \eqref{eq1:mild}.
\end{proposition}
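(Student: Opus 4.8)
Here is how I would go about proving the proposition.

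\medskip

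The plan is to work directly with the mild-solution representation \eqref{eq1:mild},
\[
x(t)=T(t)x_{0}+\int_{0}^{t}T_{-1}(t-s)Bu(s)\,\mathrm{d}s ,
\]
and to treat the two summands separately. Since $T$ is exponentially stable there are $M\geq 1$ and $\omega_{0}>0$ with $\|T(t)\|\leq M\ee^{-\omega_{0}t}$, so the first term is bounded by $M\ee^{-\omega_{0}t}\|x_{0}\|_{X}$, which supplies the $\mathcal{KL}$-function $\beta(s,t)=M\ee^{-\omega_{0}t}s$. By linearity of $u\mapsto x$ it then suffices to establish a bound
\[
\Bigl\|\int_{0}^{t}T_{-1}(t-s)Bu(s)\,\mathrm{d}s\Bigr\|_{X}\leq C\,\|u\|_{L^{q}(0,t;U)}
\]
with $C$ independent of $t$; this produces $\gamma(r)=Cr\in\mathcal{K}$, and as a by-product it shows that the right-hand side of \eqref{eq1:mild} is indeed $X$-valued, so that $(X,U,\Phi_{\mathrm{mild}})$ is a well-defined dynamical control system.

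First I would reduce the three hypotheses to the single condition (ii), $B\in\mathcal{L}(U,X_{-1+\alpha})$. One always has $\mathfrak{A}B_{0}\in\mathcal{L}(U,X)$ by the definition of a boundary control system; if in addition $B_{0}\in\mathcal{L}(U,X_{\alpha})$, then, since $A_{-1}$ restricts to a bounded operator $X_{\alpha}\to X_{-1+\alpha}$ on the interpolation--extrapolation scale, also $A_{-1}B_{0}\in\mathcal{L}(U,X_{-1+\alpha})$, whence $B=\mathfrak{A}B_{0}-A_{-1}B_{0}\in\mathcal{L}(U,X_{-1+\alpha})$; this gives (i)$\Rightarrow$(ii). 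When $X$ is reflexive, $X_{-1+\alpha}=X_{-(1-\alpha)}$ is the dual of $X_{1-\alpha}^{*}$ with respect to the pivot space $X$, so $B\in\mathcal{L}(U,X_{-1+\alpha})$ holds if and only if $B^{*}\in\mathcal{L}(X_{1-\alpha}^{*},U^{*})$; this is (ii)$\Leftrightarrow$(iii). Hence it is enough to argue under hypothesis (ii).

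Assuming $B\in\mathcal{L}(U,X_{-1+\alpha})$, the decisive ingredient is the smoothing estimate for the analytic semigroup on the interpolation scale: for every $\mu\in(0,-\omega_{A})$ there is $C_{\mu}>0$ with $\|(-A)^{1-\alpha}T(t)\|\leq C_{\mu}\,t^{-(1-\alpha)}\ee^{-\mu t}$ for $t>0$, and since $(-A)^{-(1-\alpha)}$ is an isometric isomorphism of $X_{-1+\alpha}$ onto $X$ commuting with the semigroup, this upgrades to $\|T_{-1}(t)\|_{\mathcal{L}(X_{-1+\alpha},X)}\leq C_{\mu}\,t^{-(1-\alpha)}\ee^{-\mu t}$ (for $\alpha=1$ this is just $\|T(t)\|\leq M\ee^{-\mu t}$). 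Writing $g(\sigma)=\sigma^{-(1-\alpha)}\ee^{-\mu\sigma}$ and using $\|Bu(s)\|_{X_{-1+\alpha}}\leq\|B\|_{\mathcal{L}(U,X_{-1+\alpha})}\|u(s)\|_{U}$ together with H\"older's inequality with the conjugate exponent $q'$ of $q$, I would estimate
\[
\Bigl\|\int_{0}^{t}T_{-1}(t-s)Bu(s)\,\mathrm{d}s\Bigr\|_{X}\leq C_{\mu}\|B\|_{\mathcal{L}(U,X_{-1+\alpha})}\int_{0}^{t}g(t-s)\|u(s)\|_{U}\,\mathrm{d}s\leq C_{\mu}\|B\|_{\mathcal{L}(U,X_{-1+\alpha})}\,\|g\|_{L^{q'}(0,\infty)}\,\|u\|_{L^{q}(0,t;U)} ,
\]
and $\|g\|_{L^{q'}(0,\infty)}<\infty$ precisely when $(1-\alpha)q'<1$, that is, when $q>\alpha^{-1}$ (the endpoint $q=\infty$, where $q'=1$, being always included, resp.\ $q\in(1,\infty]$ when $\alpha=1$). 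Continuity of $t\mapsto x(t)$ in $X$ follows from the same bound applied to increments, splitting $x(t+\delta)-x(t)=(T(\delta)-I)x(t)+\int_{t}^{t+\delta}T_{-1}(t+\delta-s)Bu(s)\,\mathrm{d}s$ and invoking strong continuity of $T$.

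The only genuinely quantitative step is the last one: matching the order $t^{-(1-\alpha)}$ of the singularity produced by analytic smoothing of the (unbounded) operator $B$ against the integrability of $u$. It is exactly the borderline $(1-\alpha)q'<1\Leftrightarrow q>\alpha^{-1}$ that fixes the admissible range of $q$ and, for $\alpha<1$, explains why $L^{1}$- and small $L^{q}$-ISS may be unavailable for boundary control, in contrast with the bounded/distributed case $\alpha=1$; in the language of systems theory this is nothing but the classical sufficient condition for $L^{q}$-admissibility of analytically smoothed control operators. The reduction of (i)--(iii) to (ii) and the well-definedness of $\Phi_{\mathrm{mild}}$ are then routine bookkeeping with the interpolation calculus recalled before the statement.
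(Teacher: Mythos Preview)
Your proof is correct and follows essentially the same route as the paper's: reduce hypotheses (i) and (iii) to (ii), use the analytic smoothing bound $\|T_{-1}(t)\|_{\mathcal{L}(X_{-1+\alpha},X)}\lesssim t^{-(1-\alpha)}\ee^{-\mu t}$, and apply H\"older's inequality to see that the kernel lies in $L^{q'}(0,\infty)$ precisely when $q>\alpha^{-1}$. You are slightly more explicit than the paper in spelling out the reduction (i)$\Rightarrow$(ii), (iii)$\Leftrightarrow$(ii) and the continuity of the mild solution, but the substance is identical.
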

\begin{proof}
Either of the assumptions on $B$ imply that $(-A)^{-1+\alpha}B\in\mathcal{L}(U,X)$ and hence,
\begin{align*}\|T_{-1}(t)B\|_{\mathcal{L}(U,X)}={}&\|T_{-1}(t)(-A)^{1-\alpha}(-A)^{-1+\alpha}B\|_{\mathcal{L}(U,X)}\\
\leq{}& \|T_{-1}(t)(-A)^{1-\alpha}\|_{\mathcal{L}(X)}\|(-A)^{-1+\alpha}B\|_{\mathcal{L}(U,X)}\\
						\lesssim{}& t^{-1+\alpha}\ee^{t\omega} \|B\|_{\mathcal{L}(U,X_{-1+\alpha})}.
\end{align*}
Thus, for the H\"older conjugate $p$ of $q>(1-1+\alpha)^{-1}=\alpha^{-1}$,
\begin{align*}
\int_{0}^{t}\|T_{-1}(t-s)Bu(s)\|_{X}\,\mathrm{d}s{}&\lesssim \|B\|_{\mathcal{L}(U,X_{-1+\alpha})}  C_{q,\omega}\|u\|_{L^{q}(0,t;U)},
\end{align*}
where we used that $(-1+\alpha)p\in(-1,0)$ and $C_{q,\omega}=\|\ee^{(t-\cdot)\omega }(t-\cdot)^{(-1+\alpha)}\|_{L^{p}(0,t)}$.
Therefore the integral $\int_{0}^{t}T_{-1}(t-s)Bu(s)\,\mathrm{d}s$ converges in $X$ for any $u\in L^{q}(0,t;U)$ and the assertion follows.
\end{proof}
Recalling that the operator $B_{0}$ is not uniquely determined by the boundary control system in general, it is however easily seen that Condition (i) in the above propostion holds for \emph{all} right inverses of $\mathfrak{B}$ if and only if it holds for some $B_{0}$.
Looking at the proof, Proposition \ref{prop:analyticB} may seem rather elementary. However, it is widely applicable to settle ISS for linear parabolic boundary control problems as the assumption can often be checked by known properties of boundary trace operators.
We now come back to the discussion of the heat equation mentioned in the introduction for  general $n$-dimensional spatial domains.
\begin{example}[Heat equation with Neumann boundary control]\label{ex3:heatNeumann}
 Let $\Omega\subset \R^{n}$ be a domain with $C^{2}$-boundary $\partial \Omega$. Consider the Neumann boundary controlled heat equation with additional distributed control $d$, i.e.\
\begin{align*}
	\dot{x}(\xi,t)={}&\Delta x(\xi,t)-ax(\xi,t)+d(\xi,t),&(\xi,t)\in\Omega\times (0,\infty)\\
	\frac{\partial x}{\partial \nu}(\xi,t)={}&u(\xi,t)&(\xi,t)\in\partial \Omega\times (0,\infty)\\
	x(\xi,0)={}&x_{0}(\xi), &\xi\in\Omega.
\end{align*}
We can formulate this as a boundary control system of the form \eqref{eq1:BCSB} with 
$$X=L^{2}(\Omega),\mathfrak{A}=\Delta-aI_{X}, \mathfrak{B}=\frac{\partial }{\partial \nu},\tilde{B}=I_{X},U=L^{2}(\partial\Omega)$$
with $A=\Delta-aI_{X}$ and $D(A)=\{x\in H^{1}(\Omega):\Delta x\in L^{2}(\Omega),\frac{\partial }{\partial \nu}x=0\}$. Integrating by parts twice gives for $x\in H^{2}(\Omega)$, $\psi\in D(A)$,
$$\langle \mathfrak{A}x,\psi\rangle=\langle\mathfrak{B}x,\psi|_{\partial\Omega}\rangle_{L^{2}(\partial\Omega)}+\langle x,A\psi\rangle,$$
Since $A$ is self-adjoint, we conclude by Propositions \ref{prop:weak} that $B^{*}$ equals the boundary trace operator $\gamma_{0}$. It is known that $\gamma_{0}\in \mathcal{L}(H^{\beta}(\Omega),L^{2}(\partial\Omega))$ for any $\beta>\frac{1}{2}$, where $H^{\beta}(\Omega)$ refers to the classical fractional Sobolev space (note, however, that $\gamma_{0}$ is bounded from $H^{1}(\Omega)$ to $H^{\frac{1}{2}}(\partial\Omega)$, see \cite[13.6.1]{TucsWeis09}). In terms of the abstract Sobolev spaces $X_{\alpha}$ this means that $B^{*}\in\mathcal{L}(X_{\beta},U^{*})$ for any $\beta>\frac{1}{4}$, see e.g.\ \cite{LasiTrigI00}. Also recall that the Neumann Laplacian on  $L^{2}(\Omega)$ has spectrum in $(-\infty,0]$ which implies that $A$ generates an exponentially stable analytic semigroup as $a>0$. Thus, we can infer from Proposition \ref{prop:analyticB} that the system is $L^{q}$-ISS with respect for any $q>(1-\frac{1}{4})^{-1}=\frac{4}{3}$. Because $\tilde{B}$ is bounded from $X$ to $X$, we obtain the ISS estimates for any $q>\frac{3}{4}$ and $\tilde{q}\geq1$.
\begin{equation*}
\|x(t)\|_{L^{2}(\Omega)}\lesssim \ee^{-a t}\|x_{0}\|_{L^{2}(\Omega)}+\|u\|_{L^{q}(0,t;L^{2}(\partial\Omega))}+\|d\|_{L^{\tilde{q}}(0,t;X)}
\end{equation*}
for all $t>0$, $d\in L^{\tilde{q}}(0,t;X)$ and $u\in L^{q}(0,t;L^{2}(\partial\Omega))$. \newline
Similarly, we can consider the situation where the control does only act on a part of the boundary $\partial\Omega$, and adapt the argumentation in \cite[p.~351]{ByrnGillShubWeis02}.

\end{example}
\begin{rem}
\begin{enumerate}
\item  The author is not aware of way to sharpen the ``Lyapunov argument'' for ISS from the introduction on the Neumann controlled heat equation in order to derive the same (sharp) result $p>4/3$ as in  Example \ref{ex3:heatNeumann}. It seems that such Lyapunov arguments heavily rely on the fact that the space of input functions is $L^2$ (in time).
\item It is straight-forward to generalize Example \ref{ex3:heatNeumann} to a Neumann boundary problem for a general uniformly elliptic second-order differential operator with smooth coefficients.
\end{enumerate}
\end{rem}
Another, and in the view of the Lyapunov arguments mentioned in the introduction, more interesting example is the Dirichlet-boundary controlled heat equation.
\begin{example}[Dirichlet controlled heat equation]\label{ex:dirichlet}
Let $\Omega\subset \R^{n}$ be a domain with $C^{2}$-boundary $\partial \Omega$. The Dirichlet boundary controlled heat equation 
\begin{align*}
	\dot{x}(\xi,t)={}&\Delta x(\xi,t),&(\xi,t)\in\Omega\times (0,\infty)\\
	x(\xi,t)={}&u(\xi,t)&(\xi,t)\in\partial \Omega\times (0,\infty)\\
	x(\xi,0)={}&x_{0}(\xi), &\xi\in\Omega.
\end{align*}
can be formulated as a boundary control system with 
$$X=L^{2}(\Omega),\mathfrak{A}=\Delta, \mathfrak{B}x=x|_{\partial\Omega}, U=L^{2}(\partial\Omega)$$
with $A=\Delta$ and $D(A)=\{x\in H^{1}(\Omega):\Delta x\in L^{2}(\Omega),\gamma_{0}x=0\}$, where $\gamma_{0}$ denotes the boundary trace. Integrating by parts twice gives for $x\in D(A)$, $\psi\in C^{\infty}(\Omega)$,
$$\langle \mathfrak{A}x,\psi\rangle=\langle\mathfrak{B}x,\frac{\partial \psi}{\partial \nu}\rangle_{L^{2}(\partial\Omega)}+\langle x,A\psi\rangle,$$
Since $A$ is self-adjoint, we conclude by Proposition \ref{prop:weak} that $B^{*}$ equals the Neumann boundary trace operator $\gamma_{1}$ for which we have $\gamma_{1}\in \mathcal{L}(H^{\beta}(\Omega),L^{2}(\partial\Omega))$ for any $\beta>\frac{3}{2}$, see e.g.\ \cite[Appendix]{TucsWeis09}. In terms of the abstract Sobolev spaces $X_{\alpha}$ this means that $B^{*}\in\mathcal{L}(X_{\beta},U^{*})$ for any $\beta>\frac{3}{4}$, see e.g.\ \cite{LasiTrigI00}. Since the Dirichlet Laplacian on  $L^{2}(\Omega)$ generates an exponentially stable analytic semigroup, by Proposition \ref{prop:analyticB}  the system is $L^{q}$-ISS with respect for any $q>(1-\frac{3}{4})^{-1}=4$. Thus,
 \begin{equation*}
\|x(t)\|_{L^{2}(\Omega)}\lesssim \ee^{-\lambda_{0} t}\|x_{0}\|_{L^{2}(\Omega)}+\|u\|_{L^{q}(0,t;L^{2}(\partial\Omega))}
\end{equation*}
for all $t>0$, some $\lambda_{0}<0$ and all $u\in L^{q}(0,t;L^{2}(\partial\Omega)$.

\end{example}

As seen above, Proposition \ref{prop:analyticB} $L^{q}$-ISS for parabolic equation provided sufficient properties of the boundary operator can be shown. In concrete situations this typically reduces to knowledge of boundary traces. Let us briefly elaborate on what can be said in situations where this information is not accessible. Furthermore, one may also ask the question whether at all boundary systems exist which are not $L^{q}$-ISS for some finite $q$. Let us first answer this positively with a, admittedly pathologic, example.
\begin{example}\label{ex32}
Let $X=\ell^{2}(\N)$ be the space of complex-valued, square-summable sequences and let $(e_{n})_{n\ge1}$ denote the canonical orthonormal basis. Define $\mathfrak{A}:D(\mathfrak{A})\to X$ and $\mathfrak{B}:D(\mathfrak{A})\to\C$ by
\begin{align*}
D(\mathfrak{A})={}&\left\{ x\in\ell^{2}(\N)\colon\exists c_{x}\in\C \text{ such that }\sum_{n=1}^{\infty}\left|-2^{n}\langle x,e_{n}\rangle+\frac{c_{x}2^{n}}{n}\right|^{2}<\infty\right\}\\
\mathfrak{A}x={}&\sum_{n=1}^{\infty}\left(-2^{n}\langle x,e_{n}\rangle e_{n}+\frac{c_{x}2^{n}}{n}\right)\\
\mathfrak{B}x={}&c_{x}
\end{align*}
To see that $\mathfrak{A}$ and $\mathfrak{B}$ are well-defined, suppose that $x\in X$ and $c_{x},\tilde{c}_{x}\in \mathbb{C}$. Then it holds that 
$$\sum_{n=1}^{\infty}\left|-2^{n}\langle x,e_{n}\rangle+\frac{c_{x}2^{n}}{n}\right|^{2}<\infty, \quad \sum_{n=1}^{\infty}\left|-2^{n}\langle x,e_{n}\rangle+\frac{\tilde{c}_{x}2^{n}}{n}\right|^{2}<\infty.$$
By triangle inequality (in $\ell^{2}(\mathbb{N})$), it follows that $|c_{x}-\tilde{c}_{x}|^{2}\sum_{n=1}^{\infty}\frac{2^{2n}}{n^2}<\infty$, and hence $c_{x}=\tilde{c}_{x}$. Similarly, it follows that both operators are linear. Since $(\frac{1}{n})_{n\in\N}\in\ell^{2}(\N)$, it is clear that $\mathfrak{B}$ possesses  a right-inverse, e.g.\ given by 
$B_{0}c=c\sum_{n=1}^{\infty}\frac{1}{n}e_{n}$, $c\in \C$. The operator $A=\mathfrak{A}|_{\ker{\mathfrak{B}}}$ is given through $Ae_{n}=-2^{n}e_{n}$, $n\in\N$ on its maximal domain. This operator generates an exponentially stable, analytic semigroup $T$ determined by $T(t)e_{n}=\ee^{-2^{n}t}$, $n\in\N$. Thus, $(\mathfrak{A},\mathfrak{B})$ constitute a boundary control system. The operator $B=\mathfrak{A}B_{0}-A_{-1}B_{0}$ thus becomes $Bc=-c\sum_{n=1}^{\infty}\frac{2^{n}}{n}e_{n}$, which has to be interpreted as an operator from $\C$ to $X_{-1}=\{\sum_{n=1}^{\infty} x_{n}e_{n}\colon (\frac{x_{n}}{2^{n}})_{n\in\N}\in\ell^{2}(\N)\}$. In \cite[Example 5.2]{JacoNabiPartSchw18}, which in turn was based on a result from \cite{JacoPartPott14}, it was shown that the system $\Sigma(A,B)$ of the form \eqref{eq1:AB} is not $L^{q}$-ISS for any $q<\infty$. In particular, this implies that for any $q<\infty$ there exists a time $t_{0}$ and a sequence of continuously differentiable functions $u_{m}:[0,\infty)\to\C$ such that 
\begin{itemize}
\item $\sup_{m\in\N} \|u_{m}\|_{L^{q}(0,t_{0})}<\infty$ and
\item the classical solution $x_{m}:[0,\infty)\to X$ to the boundary control system $(\mathfrak{A},\mathfrak{B})$ with initial value $x_{0}=0$ and input function $u_{m}$ satisfy
$$\lim_{m\to\infty}\|x_{m}(t_{0})\|_{X}\to\infty.$$
\end{itemize}
However, the boundary control system is $L^{\infty}$-ISS, by the upcoming Theorem \ref{sec3:thm}.
\end{example}

\begin{theorem}\label{sec3:thm}
Let $(\mathfrak{A},\mathfrak{B})$ be a boundary control system on a Hilbert space with associated operator $A$. If the following assumptions are satisfied,
\begin{itemize}
	\item $A$ generates an exponentially stable, analytic semigroup, and
	\item there exists an equivalent scalar product $\langle\cdot,\cdot\rangle_{new}$ on $X$ such that $A$ is dissipative, i.e. $\Re\langle Ax,x\rangle_{new}\leq0$,
	\item the range of $\mathfrak{B}$ is finite-dimensional,
\end{itemize} 
then $(\mathfrak{A},\mathfrak{B})$ is $L^{\infty}$-ISS and the (mild) solutions are continuous for all $(x_{0},u)\in X\times L_{\mathrm{loc}}^{\infty}(\Rp;U)$. \newline 
Moreover, there exist positive constants $C_{1}, C_{2},\omega,\epsilon$ and a strictly increasing, smooth, convex function $\Phi:[0,\infty)\to[0,\infty)$ with $\Phi(0)=0,\lim_{x\to\infty}\frac{\Phi(x)}{x}=\infty$ such that
\begin{equation}\label{eq:thm1}
\|x(t)\|_{X}\leq C_{1}\ee^{-\omega t}\|x_{0}\|_{X}+C_{2}\ee^{-\epsilon t}\inf\left\{k\geq0\colon\int_{0}^{t}\Phi\left(\tfrac{\ee^{s\epsilon}\|u(s)\|_{U}}{k}\right)\mathrm{d}s\leq1\right\}
\end{equation}
for any  mild solution $x$, $t>0$, $u\in L_{\mathrm{loc}}^{\infty}(0,\infty)$ and $x_{0}\in X$.
\end{theorem}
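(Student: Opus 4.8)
The plan is to decompose the mild solution \eqref{eq1:mild} as $x(t)=T(t)x_{0}+\Psi_{t}u$ with $\Psi_{t}u:=\int_{0}^{t}T_{-1}(t-s)Bu(s)\,\mathrm{d}s$. Exponential stability of $T$ produces the first summand of \eqref{eq:thm1}, so the statement reduces to three assertions about the input operator: $\Psi_{t}u\in X$, continuity of $t\mapsto\Psi_{t}u$ in $X$, and the quantitative bound for $x_{0}=0$. As $\operatorname{ran}\mathfrak{B}$ is finite-dimensional, $B$ is described by finitely many $b_{1},\dots,b_{m}\in X_{-1}$, and by the triangle inequality it suffices to treat a single $b\in X_{-1}$ with scalar input $u$; set $y:=(-A_{-1})^{-1}b\in X$. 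Throughout one works with $\langle\cdot,\cdot\rangle_{new}$ as underlying scalar product and $A^{*}$ the corresponding adjoint.

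The decisive use of the hypotheses is that $-A$ is $m$-accretive with respect to $\langle\cdot,\cdot\rangle_{new}$, hence --- being $m$-accretive on a Hilbert space --- admits a bounded $H^{\infty}$-functional calculus; together with analyticity and exponential stability of $T$ this yields, for every sufficiently small $\epsilon>0$, the square function estimates
\begin{equation*}
\int_{0}^{\infty}\ee^{\epsilon r}\bigl\|(-A)^{1/2}T(r)v\bigr\|^{2}\,\mathrm{d}r\lesssim\|v\|^{2},\qquad\int_{0}^{\infty}\ee^{\epsilon r}\bigl\|(-A^{*})^{1/2}T^{*}(r)v\bigr\|^{2}\,\mathrm{d}r\lesssim\|v\|^{2}\qquad(v\in X),
\end{equation*}
($A^{*}$ being $m$-accretive as well). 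This is the abstract surrogate, for non-normal $A$, of the eigenfunction expansion behind Example~\ref{ex32}, where the clean bound $\|\Psi_{t}u\|_{X}\le\|(-A_{-1})^{-1}b\|_{X}\|u\|_{L^{\infty}(0,t)}$ comes directly from the spectral theorem.

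To exploit this I would test against $z\in D(A^{*})$ (legitimate since $\Psi_{t}u$ is a priori only an element of $X_{-1}$ and $D(A^{*})$ is dense in $X$), use that $T_{-1}(r)b=(-A)T(r)y\in X$ for $r>0$, and split the singularity symmetrically via $(-A)T(r)=\bigl((-A)^{1/2}T(r/2)\bigr)\bigl((-A)^{1/2}T(r/2)\bigr)$ to obtain
\begin{equation*}
\langle\Psi_{t}u,z\rangle_{new}=\int_{0}^{t}\bigl\langle(-A)^{1/2}T(\tfrac{t-s}{2})y,\,(-A^{*})^{1/2}T^{*}(\tfrac{t-s}{2})z\bigr\rangle_{new}\,u(s)\,\mathrm{d}s.
\end{equation*}
With $F(r):=\|(-A)^{1/2}T(r/2)y\|$ and $G(r):=\|(-A^{*})^{1/2}T^{*}(r/2)z\|$ this gives $|\langle\Psi_{t}u,z\rangle_{new}|\le\int_{0}^{t}F(t-s)G(t-s)|u(s)|\,\mathrm{d}s$, where the square function estimates provide $\|\ee^{\epsilon\cdot/2}F\|_{L^{2}(0,\infty)}\lesssim\|y\|$ and $\|\ee^{\epsilon\cdot/2}G\|_{L^{2}(0,\infty)}\lesssim\|z\|$, while analyticity gives the pointwise bounds $F(r)\lesssim r^{-1/2}\|y\|$ and $G(r)\lesssim r^{-1/2}\|z\|$. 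These two pieces of data single out a smooth, convex, superlinear Young function $\Phi$ --- superlinear precisely because $F$ and $G$ are only square-integrable although of pointwise size $r^{-1/2}$, so that $T_{-1}(\cdot)b$ itself need not be integrable and no clean $L^{1}$-kernel estimate is available --- for which $s\mapsto\ee^{\epsilon(t-s)}F(t-s)G(t-s)$ lies in the complementary Orlicz space with norm $\lesssim\|y\|\,\|z\|$, uniformly in $t$; an Orlicz--Hölder inequality then produces
\begin{equation*}
|\langle\Psi_{t}u,z\rangle_{new}|\lesssim\|(-A_{-1})^{-1}b\|_{X}\,\|z\|_{X}\,\ee^{-\epsilon t}\bigl\|\ee^{\epsilon\cdot}u\bigr\|_{L^{\Phi}(0,t)},
\end{equation*}
and, by reflexivity, the supremum over $z$ with $\|z\|_{X}\le1$ gives $\Psi_{t}u\in X$ together with the input part of \eqref{eq:thm1}.

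Continuity of $t\mapsto\Psi_{t}u$ in $X$ is obtained in the same way by estimating the increments $\Psi_{t}u-\Psi_{t'}u$, absolute continuity of the square function integrals making the resulting modulus vanish as $t'\to t$ even for discontinuous bounded $u$; this is exactly the point that cannot be reduced to $L^{\infty}$-ISS of classical solutions via Proposition~\ref{sec2:propsol}, and it is what makes the statement about $L^{\infty}_{\mathrm{loc}}$-inputs substantive. Finally, since $\ee^{-\epsilon t}\|\ee^{\epsilon\cdot}u\|_{L^{\Phi}(0,t)}\lesssim\|u\|_{L^{\infty}(0,t)}$, the estimate is in particular an $L^{\infty}$-ISS estimate and yields well-posedness and continuity of the mild solutions for all $(x_{0},u)\in X\times L^{\infty}_{\mathrm{loc}}(\R_{+};U)$. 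I expect the main obstacle to be precisely this ``Orlicz admissibility'' step --- identifying $\Phi$ from the square function data and running the Orlicz--Hölder estimate uniformly in the test vector, keeping exact track of the $r^{-1/2}$-behaviour of $F$ and $G$ near the origin --- together with the continuity for genuinely discontinuous inputs rather than mere boundedness.
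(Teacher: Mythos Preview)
Your overall strategy is precisely the one underlying \cite{JacoSchwZwar19}, which the paper's proof simply invokes as a black box together with a rescaling trick (applying those results to $(\mathfrak{A}+\epsilon I,\mathfrak{B})$ and transforming back). So you are not taking a different route; you are opening the box. The ingredients you list --- similarity to a contraction semigroup, bounded $H^{\infty}$-calculus, square function estimates for $A$ and $A^{*}$, reduction to a scalar input, and an Orlicz--H\"older step against a fixed kernel --- are exactly the right ones.

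There is, however, a genuine gap in how you organise the key estimate. You propose to place the product $e^{\epsilon\cdot}FG$ in the complementary Orlicz space $L^{\Phi^{*}}$ with norm $\lesssim\|y\|\,\|z\|$, uniformly in the test vector $z$. This cannot be done for any superlinear $\Phi$: already in the diagonal situation of Example~\ref{ex32}, taking $z$ to be the $N$th eigenvector gives $G(r)^{2}=\lambda_{N}e^{-\lambda_{N}r}$, and for $F(r)\sim r^{-1/2}$ near the origin one computes $\|FG\|_{L^{p}}\sim\lambda_{N}^{(p-1)/p}\to\infty$ for every $p>1$; the same blow-up persists for any genuinely superlinear $\Phi^{*}$. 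The only information the two abstract constraints on $G$ (uniform $L^{2}$-bound and pointwise $r^{-1/2}$-bound) buy you on $FG$ is a uniform $L^{1}$-bound, which yields merely $L^{\infty}$-ISS and not \eqref{eq:thm1}.

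The fix is small but essential: apply Cauchy--Schwarz in the $s$-integral \emph{before} passing to an Orlicz norm, so that $G$ is eliminated via its $L^{2}$-square-function bound alone:
\[
\int_{0}^{t}F(t-s)G(t-s)|u(s)|\,\mathrm{d}s\le\Bigl(\int_{0}^{t}G(t-s)^{2}\,\mathrm{d}s\Bigr)^{1/2}\Bigl(\int_{0}^{t}F(t-s)^{2}|u(s)|^{2}\,\mathrm{d}s\Bigr)^{1/2}\lesssim\|z\|\Bigl(\int_{0}^{t}F(t-s)^{2}|u(s)|^{2}\,\mathrm{d}s\Bigr)^{1/2}.
\]
Equivalently, test with $z=\Psi_{t}u$ itself to obtain the self-referential inequality $\|\Psi_{t}u\|^{2}\lesssim(\int F^{2}|u|^{2})^{1/2}\|\Psi_{t}u\|$. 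Either way, only the \emph{fixed} $L^{1}$-function $F^{2}$ remains, and de la Vall\'ee Poussin provides a superlinear Young function $\Phi^{*}$ with $F^{2}\in L^{\Phi^{*}}$; Orlicz--H\"older then gives the bound in terms of $\||u|^{2}\|_{L^{\Phi}}$, from which \eqref{eq:thm1} follows with $\Phi$ replaced by $x\mapsto\Phi(x^{2})$. Your treatment of the exponential weight and of continuity is fine once this step is repaired.
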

\begin{proof}
This is a direct consequence of the results in \cite{JacoSchwZwar19} where  similar results were stated for systems of the form \eqref{eq1:AB}. It remains to observe the following. Because of the assumed dissipativity, the semigroup $T$ is similar to a contraction semigroup. Since $\mathfrak{B}$ has a right-inverse, it follows that $\dim U=\dim \operatorname{ran}\mathfrak{B}<\infty$. Hence, $B=(\mathfrak{A}-A)B_{0}$ is an operator from a finite-dimensional space to $X_{-1}$. 
In order to derive Estimate \eqref{eq:thm1}, we use a rescaling argument: Let $\epsilon>0$ such that $\tilde{T}=\ee^{\epsilon\cdot }T$ is exponentially stable and consider the boundary control system  $(\mathfrak{A}+\epsilon I, \mathfrak{B})$. Note that the spaces $X_{-1}$ and the corresponding one for $A+\epsilon I$, the generator of $\tilde{T}$,  coincide and also $B=(\mathfrak{A}-A_{-1})B_{0}=(\mathfrak{A}+\epsilon I- A_{-1}-\epsilon I)B_{0}$. Corollary 21 and  Theorem 19 from \cite{JacoSchwZwar19} show that there exist positive constants $\tilde{C}_{1},\tilde{C}_{2}$ and ${\omega}$ and a function $\Phi$ with the properties described in the statement of the theorem such that
\begin{equation}
\label{eq:thm2}\|\tilde{x}(t)\|_{X}\leq \tilde{C}_{1}\ee^{-\omega t}\|x_{0}\|_{X}+\tilde{C}_{2}\inf\left\{k\geq0\colon\int_{0}^{t}\Phi\left(\frac{\|\tilde{u}(s)\|_{U}}{k}\right)\mathrm{d}s\leq1\right\}
\end{equation}
for any mild solution $\tilde{x}$ of $(\mathfrak{A}+\epsilon I,\mathfrak{B})$ and $t>0$, $\tilde{u}\in L_{\mathrm{loc}}^{\infty}(0,\infty)$ and $x_{0}\in X$.  On the other hand it follows from the representation \eqref{eq1:mild}, that any mild solution $x$ to $(\mathfrak{A},\mathfrak{B})$ with input function $u$, the function $\tilde{x}(t)=\ee^{\epsilon t}x(t)$ defines a mild solution of the boundary control problem $(\mathfrak{A}+\epsilon I, \mathfrak{B})$ with input function $\tilde{u}=\ee^{\cdot\epsilon}u$. Combining this with \eqref{eq:thm2} shows \eqref{eq:thm1}. \\
To see that  \eqref{eq:thm1} implies that $(\mathfrak{A},\mathfrak{B})$ is $L^{\infty}$-ISS, we show that there exists a constant $C_{3}$ such that for all $t>0$,
$$\int_{0}^{t}\Phi\left(\frac{\ee^{s\epsilon}\|u(s)\|_{U}}{C_{3}\ee^{\epsilon t}\|u\|_{L^{\infty}(0,t;U)}}\right)\mathrm{d}s\leq 1.$$
Since $\Phi$ is strictly increasing it thus suffices to show that $$\sup_{t>0}\int_{0}^{t}\Phi\left(\ee^{-s\epsilon}C_{3}^{-1}\right)\mathrm{d}s\leq 1,$$
which follows easily by the property that $\lim_{x\to0}\frac{\Phi(x)}{x}=0$.
 
\end{proof}
\begin{rem}[on Theorem \ref{sec3:thm}]
\begin{itemize}
\item Let us point out that \eqref{eq:thm1} is indeed stronger than the corresponding estimate with $\epsilon=0$: By monotonicity of $\Phi$, 
\begin{align*}\ee^{-\epsilon t}\inf\left\{k\geq0\colon\int_{0}^{t}\Phi\left(\tfrac{\ee^{s\epsilon}\|u(s)\|_{U}}{k}\right)\mathrm{d}s\leq1\right\}\leq
\inf\left\{k\geq0\colon\int_{0}^{t}\Phi\left(\tfrac{\|u(s)\|_{U}}{k}\right)\mathrm{d}s\leq1\right\}.
\end{align*}
 Furthermore, in case that $\Phi$ can be chosen as $\Phi(x)=x^{q}$, $x\in[0,\infty)$ for $q\in(1,\infty)$, the estimate reduces to an $L^{q}$-ISS estimate. 
\item The BCS in Example \ref{ex32} satisfies the assumptions of Theorem \ref{sec3:thm} as can be checked by the explicit expression for the semigroup. However, the function $\Phi$ cannot be taken of the form $\Phi(x)=x^{q}$ for any $q<\infty$, \cite[Example 5.2]{JacoNabiPartSchw18}. 
\item The assumption that there exists an equivalent scalar product such that $A$ is dissipative is rather weak from a practical point of view: Most known practically-relevant examples of differential operators satisfy this condition, \cite{KunstWeis04} which can be rephrased as the property that the semigroup is similar to a contraction semigroup. However, it is not difficult to construct counterexamples assuring that not every analytic semigroup on a Hilbert space is similar to a contractive one. This can be done by diagonal operators with respect to (Schauder) basis which is not a Riesz basis, \cite{BailClem91,Haase06}.
\end{itemize}
\end{rem}

\begin{example}
Let $(\mathfrak{A},\mathfrak{B})$ be a boundary control system with $\dim U<\infty$ and $A$ being a Riesz-spectral operator, i.e.\ $A=S^{-1}\Lambda S$ for a bijective operator $S\in \mathcal{L}(X)$ and a densely defined closed operator $\Lambda: D(\Lambda)\subset X\to X$ with discrete spectrum $\sigma(\Lambda)$ contained in a left-half-plane of the complex plane and such that the eigenvectors establish an orthonormal basis of $X$, where we also assume that the eigenvalues are pairwise distinct. By Parseval's identity, it follows that $A$ is dissipative with respect to the scalar product $\langle S\cdot,S\cdot\rangle$. If moreover, it is assumed that $\sigma(A)=\sigma(\Lambda)$ is contained in a sector $S_{\theta}:=\{z\in\C\colon \arg(z)\leq\theta\}$ with $\theta<\frac{\pi}{2}$, then $A$ generates an analytic semigroup, which is exponentially stable if and only if $\sup\{\Re\lambda\colon\lambda\in\sigma(A)\}<0$. For details on Riesz-spectral operator we refer for instance to \cite{CurtZwar95}. Therefore, the conditions of Theorem \ref{sec3:thm} are satisfied and $(\mathfrak{A},\mathfrak{B})$ is $L^{\infty}$-ISS for input data $(x_{0},u)\in X\times L_{\mathrm{loc}}^{\infty}(\Rp;U)$. See also \cite{LhacShor19} and \cite{JacoNabiPartSchw17,JacoNabiPartSchw18} for different proofs of this fact. In particular in the latter, more generally $q$-Riesz-spectral operators are considered.
\end{example}

\section{A primer on semilinear boundary control systems}\label{sec4}
In the following we extend the linear systems considered in Section \ref{sec2} to semilinear ones. As motivating example serves \eqref{eq0:nonlinearheat}. The abstract theory of semilinear pde's (without controls/disturbances) with our without using semigroups is comparably old and can be found e.g.\ in the textbooks \cite{Henry,Pazy83}. There is a particularly rich theory for parabolic equations as smoothening effect of the linear part through the analytic semigroups allows for rather general nonlinearities. In the following we are interested in ISS estimates similar to the ones we derived for linear systems: This includes the property that the undisturbed system is uniformly asymptotically stable which requires already restrictive conditions on the nonlinearity, particularly, if we  aim for abstract results covering whole classes of examples. The simplest condition guaranteeing this global stability is a global Lipschitz condition with sufficiently small Lipschitz constant, as we shall see in Theorem \ref{sec4:thm1}. There it is shown that the usual proof technique to assess uniform global asymptotic stability for uncontrolled systems also goes through for boundary control systems provided the results we discussed in Section II. The final result of this section is Theorem \ref{thm:main}, which provides a generalization of the findings in \cite{ZhenZhu17}.

\begin{definition}[Semilinear boundary control system]
Let $(\mathfrak{A},\mathfrak{B})$ be a linear boundary control system with state space $X$ and input space $U$. Denote by $A$ the associated semigroup generator and by  $B_0$ a right-inverse of $\mathfrak{B}$. Further let
\begin{itemize}
\item $\alpha\in[0,1)$ if $A$ generates an analytic semigroup, or 
\item $\alpha=0$ else (in which case we set $X_{0}=X$).
\end{itemize}
 Let $f:\Rp\times X_{\alpha}\to X$ be a function continuous in the first variable and locally Lipschitz in the second variable with respect to the norm $X_{\alpha}$. Then the triple $(\mathfrak{A},\mathfrak{B},f)$ formally representing the equations
\begin{equation}\label{eq2:nonlinearBCS}
\left\{\begin{split}
\dot{x}(t)={}&\mathfrak{A}x(t)+f(t,x(t)),\\
\mathfrak{B}x(t)={}&u(t),&\\ \quad x(0)={}&x_{0}, 
\end{split}
\right.
\end{equation}
$t>0$, is called a \emph{semilinear boundary control system}.\newline
Let $x_{0}\in D(\mathfrak{A})$, $T>0$ and $u\in C([0,T];U)$.  A function $$x\in C([0,T];D(\mathfrak{A}))\cap C^{1}([0,T];X)$$ is called a \emph{classical solution} to the nonlinear BCS  \eqref{eq2:nonlinearBCS} on $[0,T]$ if $x(t)\in X_{\alpha}$ for all $t>0$ and the equations \eqref{eq2:nonlinearBCS} are satisfied pointwise for $t\in (0,T]$. A function $x:[0,\infty)\to X$ is called \emph{(global) classical solution} to the BCS, if $x|_{[0,T]}$ is a classical solution on $[0,T]$ for every $T>0$. If $x\in C([0,T];D(\mathfrak{A}))\cap C^{1}((0,T];X)$ and $x(t)\in X_{\alpha}$ for all $t>0$ and the equations \eqref{eq2:nonlinearBCS} are satisfied pointwise for $t\in (0,T]$, then we say that $x$ is a classical solution on $(0,T]$.

\end{definition}
Similar as in the previous section, we can define mild solutions. 
\begin{definition}[Mild solutions of semilinear boundary control systems]\label{def:mildnonlinear}
Let $(\mathfrak{A},\mathfrak{B},f)$ be a semilinear boundary control system with associated $A,B_{0},\alpha\in[0,1)$. Let $x_{0}\in X$,  $T>0$ and $u\in L_{\mathrm{loc}}^{1}([0,T];U)$. A continuous function $x:[0,T]\to X$ is called \emph{mild solution} to the BCS \eqref{eq2:nonlinearBCS} on $[0,T]$ if $x(t)\in X_{\alpha}$ for all $t>0$ and $x$ solves 
\begin{align}
x(t)={}&T(t)x_{0}+\int_{0}^{t}T(t-s)\left[f(s,x(s))+Bu(s)\right]\mathrm{d}s,\label{eq:mildsolution}
%t={}&T(t)(-A)^{\alpha}x_{0}+(-A)^{\alpha}\int_{0}^{t}T(t-s)\left[f(s,(-A)^{\alpha}x(s))+[\mathfrak{A}B_{0}-A_{-1}B_{0}]u(s)\right]\mathrm{d}s
\end{align}
for all $t\in[0,T]$ and where $B=\mathfrak{A}B_{0}-A_{-1}B_{0}$. A function $x:[0,\infty)\to X$ is called a \emph{global mild solution} if $x|_{[0,T]}$ is a mild solution on $[0,T]$ for all $T>0$.
\end{definition}
It is not hard to see that the definition \ref{def:mildnonlinear} coincides with the one for linear BCS in case that $f(t,x)=Cx$ for any bounded operator $C:X\to X$, or, more generally, when $C$ is unbounded and $A+C$ generate a strongly continuous semigroup. Moreover, any (global) classical solution is a  (global) mild solution. 

The following result is not very surprising as it shows that a semilinear system is ISS if the linear subsystem is ISS and the nonlinearity is globally Lipschitz. 
\begin{theorem}\label{sec4:thm1}
Let $(\mathfrak{A},\mathfrak{B})$ be a boundary control system which is  assumed to be $\mathcal{Z}$-ISS where $\mathcal{Z}$ refers to either $L^{q}$ with $q<\infty$, $C$ or $\Reg$. Let $M\geq1$  and $\omega<0$ be such that for the associated semigroup $T$ it holds that $\|T(t)\|\leq M\ee^{\omega t}$ for all $t>0$. Furthermore, let $f:\Rp\times X\to X$ be continuous in the first and uniformly Lipschitz continuous in the second variable with Lipschitz constant $L_{f}>0$ and $f(t,0)=0$ for all $t\geq0$. If
$$\omega+ML_{f}<0,$$  
then the semilinear boundary control system $\Sigma(\mathfrak{A},\mathfrak{B},f)$ is $\mathcal{Z}$-ISS. More precisely, 
 for any $x_{0}\in X$ and $u\in \mathcal{Z}(\Rp;U)$ System \eqref{eq2:nonlinearBCS} has a unique global mild solution $x\in C([0,\infty);X)$. Furthermore, there exist $\beta\in \mathcal{KL}$ and a constant $\sigma>0$ such that for all $t>0$, $x_{0}\in X$ and $u\in \mathcal{Z}([0,t];U)$, 
\begin{equation}
\|x(t)\|_{X}\leq \beta(\|x_{0}\|,t)+\sigma\|u\|_{\infty,[0,t]},
\end{equation}
thus $\Sigma(A,B,f)$ is $\mathcal{Z}$-ISS.
\end{theorem}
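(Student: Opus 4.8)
The plan is to combine the mild solution representation \eqref{eq:mildsolution} with a standard fixed-point argument to get local existence, and then use the smallness condition $\omega+ML_f<0$ to obtain both global existence and the ISS estimate from a single Gronwall-type bound. First I would fix $x_0\in X$, $u\in\mathcal Z(\R_+;U)$ and $T>0$, and consider the map
\[
\Gamma(x)(t)=T(t)x_0+\int_0^t T(t-s)\bigl[f(s,x(s))+Bu(s)\bigr]\,\mathrm ds
\]
on $C([0,T];X)$. The term $\int_0^t T(t-s)Bu(s)\,\mathrm ds$ is exactly the linear mild solution with initial value $0$, which by the assumed $\mathcal Z$-ISS (more precisely, by Proposition \ref{sec2:propsol} and the closed graph argument used there) is a well-defined element of $C([0,T];X)$ depending continuously on $u$; denote it $v_u(t)$ and note $\|v_u(t)\|_X\le \sigma_0\|u\|_{\mathcal Z([0,t];U)}$ for some $\sigma_0>0$ by linearity and the ISS estimate. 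Using $\|T(t)\|\le M\mathrm e^{\omega t}$ and the global Lipschitz bound $\|f(s,a)-f(s,b)\|\le L_f\|a-b\|$, one checks $\|\Gamma(x)(t)-\Gamma(y)(t)\|\le M L_f\int_0^t\|x(s)-y(s)\|\,\mathrm ds$, so $\Gamma$ is a contraction on $C([0,T];X)$ for $T$ small (or, better, a contraction in the weighted norm $\sup_t\mathrm e^{-\lambda t}\|x(t)\|$ with $\lambda>ML_f$ for \emph{every} $T$), giving a unique mild solution $x$ on $[0,T]$ for all $T$, hence a unique global mild solution. Uniqueness globally then follows by patching, and the regularity $x\in C([0,\infty);X)$ is built into the fixed-point space.

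Next I would derive the quantitative estimate. From \eqref{eq:mildsolution}, $f(s,0)=0$ and the Lipschitz bound,
\[
\|x(t)\|_X\le M\mathrm e^{\omega t}\|x_0\|_X+\sigma_0\|u\|_{\mathcal Z([0,t];U)}+ML_f\int_0^t\mathrm e^{\omega(t-s)}\|x(s)\|_X\,\mathrm ds.
\]
Writing $\phi(t)=\mathrm e^{-\omega t}\|x(t)\|_X$ and using that $\|u\|_{\mathcal Z([0,s];U)}\le\|u\|_{\mathcal Z([0,t];U)}$ is nondecreasing, Gronwall's inequality applied to $\phi$ yields $\phi(t)\le\bigl(M\|x_0\|_X+\sigma_0\mathrm e^{-\omega t}\|u\|_{\mathcal Z([0,t];U)}\bigr)\mathrm e^{ML_f t}$ — one has to carry the $u$-term carefully through Gronwall, but since it enters with the factor $\mathrm e^{\omega(t-s)}$ under the integral as well, the cleanest route is to bound $\|u\|_{\mathcal Z([0,s];U)}$ above by its value at $s=t$ and then absorb. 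Undoing the substitution gives
\[
\|x(t)\|_X\le M\mathrm e^{(\omega+ML_f)t}\|x_0\|_X+\sigma\|u\|_{\mathcal Z([0,t];U)}
\]
for a suitable $\sigma>0$ (where the coefficient of the input term again uses $\omega+ML_f<0$, or at worst $\omega<0$, to keep $\int_0^t\mathrm e^{(\omega+ML_f)(t-s)}\,\mathrm ds$ bounded). Since $\nu:=\omega+ML_f<0$, setting $\beta(r,t)=M\mathrm e^{\nu t}r$ gives a function in $\mathcal{KL}$, and the displayed estimate is exactly the claimed $\mathcal Z$-ISS bound; continuity of $x$ and membership of $u$ in $\mathcal Z$ ensure all norms are finite.

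The main obstacle, and the place where the structure of boundary control systems really enters, is establishing that the linear convolution term $v_u(t)=\int_0^t T_{-1}(t-s)Bu(s)\,\mathrm ds$ genuinely takes values in $X$ and defines a continuous, linearly-bounded map $\mathcal Z([0,t];U)\to X$ — a priori this integral only converges in $X_{-1}$. This is precisely what the hypothesis ``$(\mathfrak A,\mathfrak B)$ is $\mathcal Z$-ISS'' buys us, via Proposition \ref{sec2:propsol}: the classical-solution semiflow extends by density and the closed-graph/boundedness argument to a bounded operator on $X\times\mathcal Z([0,t];U)$, whose second component is exactly $u\mapsto v_u(t)$. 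So the proof must invoke that proposition rather than reprove admissibility of $B$. A secondary technical point is that $f$ maps into $X$ (here $\alpha=0$ is not assumed, but the global Lipschitz hypothesis is stated on $\R_+\times X\to X$, so the analytic-smoothing machinery is not needed) — this keeps the fixed-point argument entirely in $C([0,T];X)$ and avoids any $X_\alpha$-bookkeeping. Once these two points are in place the rest is the routine Picard–Gronwall scheme sketched above.
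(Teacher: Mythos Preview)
Your approach is essentially the paper's: isolate the linear mild solution $g(t)=T(t)x_0+\int_0^t T_{-1}(t-s)Bu(s)\,\mathrm ds$, use the assumed $\mathcal Z$-ISS of $(\mathfrak A,\mathfrak B)$ to conclude $g\in C([0,\infty);X)$ with $\|g(t)\|\le M\ee^{\omega t}\|x_0\|+\sigma_0\|u\|_{\mathcal Z([0,t];U)}$, obtain the mild solution as a fixed point of $x\mapsto g+\int_0^\cdot T(\cdot-s)f(s,x(s))\,\mathrm ds$, and finish with Gronwall. The paper does exactly this, citing \cite[Thm.~4.3.2]{Staf05} for the continuity of $g$ and \cite[Cor.~6.1.3]{Pazy83} for the fixed point.

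There is, however, a genuine slip in your Gronwall step. From the displayed bound you obtain, after substituting $\phi(t)=\ee^{-\omega t}\|x(t)\|$, the inequality $\phi(t)\le h(t)+ML_f\int_0^t\phi(s)\,\mathrm ds$ with $h(t)=M\|x_0\|+\sigma_0\ee^{-\omega t}\|u\|_{\mathcal Z([0,t];U)}$. Applying the nondecreasing-forcing form of Gronwall as you wrote gives $\phi(t)\le h(t)\ee^{ML_f t}$, and undoing the substitution yields a $u$-coefficient $\sigma_0\ee^{ML_f t}$, which is \emph{unbounded} in $t$; this does not give ISS. Your parenthetical about $\int_0^t\ee^{(\omega+ML_f)(t-s)}\,\mathrm ds$ points to the correct fix, but that integral does not arise from the computation you actually wrote. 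What is needed is the integral form of Gronwall, $\phi(t)\le h(t)+ML_f\int_0^t h(s)\ee^{ML_f(t-s)}\,\mathrm ds$, applied \emph{before} bounding $\|u\|_{\mathcal Z([0,s];U)}$ by its value at $t$; then the $u$-part of the integral term becomes $ML_f\sigma_0\int_0^t \ee^{(\omega+ML_f)(t-s)}\|u\|_{\mathcal Z([0,s];U)}\,\mathrm ds$, and now the bound $\|u\|_{\mathcal Z([0,s];U)}\le\|u\|_{\mathcal Z([0,t];U)}$ together with $\omega+ML_f<0$ gives a uniformly bounded coefficient. This is precisely the computation the paper carries out explicitly.
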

\begin{proof}
The proof follows the lines of a standard technique for semilinear equations with (global) Lipschitz continuous nonlinearity. For fixed $u\in \mathcal{Z}_{\mathrm{loc}}(0,\infty;U)$ and $x_{0}\in X$, it follows from the assumed ISS that the mapping
\begin{equation*}
t\mapsto g(t):=T(t)x_{0}+\int_{0}^{t}T(t-s)Bu(s)\mathrm{d}s
\end{equation*}
is continuous from $[0,\infty)$ to $X$. Indeed, the continuity follows by \cite[Theorem 4.3.2]{Staf05} (noting that ISS implies $\mathcal{Z}$-admissibility/$\mathcal{Z}$-wellposedness). The existence of a mild solution to $\Sigma(A,B,f)$ is equivalent to the existence of a fix-point $x\in C([0,\infty);X)$ of 
\begin{equation*}
x(t)=g(t)+\int_{0}^{t}T(t-s)f(s,x(s))\mathrm{d}s,\qquad t\in[0,\infty).
\end{equation*}
The latter follows from \cite[Cor.~6.1.3]{Pazy83} by the assumptions on $f$ and the continuity of $g$. The ISS property can now be shown by a Gronwall-type argument: Since the linear boundary control system is ISS, there exists $\sigma>0$  such that 
$$\|g(t)\|\leq M\ee^{\omega t}\|x_{0}\|+\sigma\|u\|_{\mathcal{Z}([0,t];U)},\qquad t>0,$$
where $M$ and $\omega$ are chosen as in the statement of the theorem.
By the definition of the mild solution,
\begin{align*}
\|x(t)\|\leq{}& \|g(t)\|+\int_{0}^{t}M\ee^{(t-s)\omega}\|f(s,x(s)\|\mathrm{d}s\\
	\leq{}& \|g(t)\|+ML_{f}\ee^{t\omega}\int_{0}^{t}\ee^{-\omega s}\|x(s)\|\mathrm{d}s.
\end{align*}
Now Gronwall's  inequality implies that
\begin{align*}
\|x(t)\| \leq{}&  \|g(t)\|+\ee^{t\omega}\int_{0}^{t}\ee^{-\omega s}ML_{f}\|g(s)\|\ee^{ML_{f}(t-s)}\mathrm{d}s\\
\leq{}&\|g(t)\|+ML_{f}\ee^{t(\omega+ML_{f})}\left(\int_{0}^{t}M\ee^{-ML_{f}s}\mathrm{d}s\|x_{0}\|+\right.\\
{}&\left.+\sigma(\|u\|_{\mathcal{Z}([0,t];U)})\int_{0}^{t}\ee^{-\omega s-ML_{f}s}\mathrm{d}s\right)
\\
={}&M\ee^{t(\omega+ML_{f})}\|x_{0}\|+\left[ML_{f}\frac{\ee^{t\omega+ML_{f}t}-1}{\omega+ML_{f}}+1\right]\sigma\|u\|_{\mathcal{Z}([0,t];U)}.
\end{align*}
Since the coefficient of the second term on the right hand side is bounded in $t$, the assertion follows.
\end{proof}
It is trivially seen that the condition on the Lipschitz constant is in general sharp as the finite-dimensional example
\[\dot{x}=-x+2x+u,\]
with $f(x)=2x$, shows. On the other hand, the slight adaption $X=\R$, $A=1$, $f(x)=-2x$, $B=0$ shows, that the result is not optimal in the sense that the ``sign'' of the nonlinearity  is crucial for asymptotic stability.

\begin{theorem} \label{thm:main}
Let $(\mathfrak{A},\mathfrak{B},f)$ be a semilinear boundary control system with associated operators $A$ and $B_{0}$. Let the following be satisfied for the linear system $(\mathfrak{A},\mathfrak{B})$:
\begin{enumerate}[label=(\roman*)]
\item the operator $A=\mathfrak{A}|_{\ker{\mathfrak{B}}}$ is self-adjoint and bounded from above by $\omega_{A}\in \R$, i.e.\ $\langle Ax,x\rangle\leq \omega_{A}$ for all $x\in D(A)$,
\item $B\in\mathcal{L}(U, X_{-\frac{1}{2}})$, where $B:=(\mathfrak{A}-A)B_{0}$.
\end{enumerate} 
Furthermore,  the function $f:[0,\infty)\times X_{\frac{1}{2}}\to X$ satisfies the following properties
\begin{enumerate}[label=(\arabic*)]
	\item $f$ is locally H\"older continuous in the first and Lipschitz in the second variable, i.e.\ for any $(t,x)\in \Rp\times X_{\frac{1}{2}}$ there exists $L>0$, $\theta\in(0,1)$, $\rho>0$ such that 
	$$\|f(t,x)-f(s,y)\|\leq L(|t-s|^{\theta}+\|x-y\|_{\frac{1}{2}})$$
	for all $(s,t)$ in the ball $B_{\rho}(t,x)$ in $\Rp\times X$  with radius $\rho$ and centre $(t,x)$.
	\item there exists a continuous, nondecreasing function $k:\Rp\to\Rp$ such that $$\|f(t,x)\|\leq k(t)(1+\|x\|_{\frac{1}{2}}),\quad \forall (t,x)\in \Rp\times X.$$
	\item there exists  constants $m_{1},m_{2}\in \R$ such that  for any $(t, x)\in \Rp\times X_{\frac{1}{2}}$ it holds that $\langle f(t,x),x\rangle\in\R$  and \[\langle f(t,x),x\rangle \leq -m_{1}\langle Ax,x\rangle+m_{2}\|x\|^{2}.\]
	\item above constants satisfy the inequality
	\[ 1-m_{1}>0 \quad\text{and}\quad (1-m_{1})\omega_{A}+m_{2}<0. \]
\end{enumerate}
Then, for any $x_{0}\in X_{\frac{1}{2}}$ and $u\in W^{2,1}(\Rp;U)$ with $A_{-1}x_{0}+Bu(0)\in X$, the semilinear boundary control system \eqref{eq2:nonlinearBCS} has a unique mild solution $x$, which is classical on $(0,\infty)$, and $(\mathfrak{A},\mathfrak{B},f)$ is $L^{q}$-ISS for any $q\ge 2$. More precisely, for any $q\geq2$ 
there exist constants $C_{1},C_{2},\omega>0$ such that for all $(t,x_{0},u)\in \Rp\times X_{\alpha}\times W^{2,1}(\Rp;U)$  with $A_{-1}x_{0}+Bu(0)\in X$ the solution $x$ satisfies
$$\|x(t)\|_{X}\leq C_{1}\ee^{-\omega t} \|x_{0}\|_{X} +C_{2}\|u\|_{L^{q}(0,t;U)}.$$
\end{theorem}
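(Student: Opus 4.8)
The strategy is to combine standard semilinear--parabolic existence theory with a single energy/Lyapunov estimate in the norm of $X$, the latter doing double duty: it supplies the a priori bound that upgrades the local solution to a global one, and it simultaneously produces the ISS estimate.

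\emph{Step 1: local solution and regularisation.} Since $A$ is self-adjoint and bounded above it is sectorial, hence generates an analytic semigroup $T$, so the scale $X_\gamma$ is available and $\alpha=\tfrac12$ is admissible. For $(x_0,u)$ as in the statement, the requirement $A_{-1}x_0+Bu(0)\in X$ says, via Proposition~\ref{sec2:prop1}(4), that $x_0\in D(\mathfrak{A})$ with $\mathfrak{B}x_0=u(0)$, so $z_0:=x_0-B_0u(0)\in\ker\mathfrak{B}=D(A)$. As in the linear case \eqref{eq1:inhomog}, $x$ solves \eqref{eq2:nonlinearBCS} iff $z:=x-B_0u$ solves $\dot z=Az+g(t,z)$, $z(0)=z_0$, with $g(t,z):=\mathfrak{A}B_0u(t)-B_0\dot u(t)+f(t,z+B_0u(t))$. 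From $\mathfrak{A}=A_{-1}|_{D(\mathfrak{A})}+B\mathfrak{B}$ and $B\in\mathcal{L}(U,X_{-1/2})$ one gets $D(\mathfrak{A})\subset X_{1/2}$, hence $B_0\in\mathcal{L}(U,X_{1/2})$, so $g$ maps $\Rp\times X_{1/2}$ into $X$; moreover $u\in W^{2,1}(\Rp;U)\hookrightarrow W^{1,\infty}(\Rp;U)$, so $u$ and $\dot u$ are Lipschitz, which together with (1) makes $g$ locally H\"older in $t$ and locally Lipschitz in the $X_{1/2}$--variable. Standard theory for semilinear analytic equations (e.g.\ \cite[Ch.~6]{Pazy83}, \cite[Ch.~3]{Henry}) then gives a unique maximal mild solution $z\in C([0,T_{\max});X_{1/2})$, classical on $(0,T_{\max})$, obeying the blow-up alternative $T_{\max}<\infty\Rightarrow\limsup_{t\uparrow T_{\max}}\|z(t)\|_{X_{1/2}}=\infty$; translating back, $x=z+B_0u$ is the unique maximal mild solution of \eqref{eq2:nonlinearBCS}, classical on $(0,T_{\max})$, with $\mathfrak{B}x(t)=u(t)$ (uniqueness of mild solutions itself following from (1) by a singular Gronwall argument).

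\emph{Step 2: the Lyapunov a priori estimate (the core).} On $(0,T_{\max})$ the solution is classical, $x(t)\in D(\mathfrak{A})\subset X_{1/2}$ and $\dot x(t)=\mathfrak{A}x(t)+f(t,x(t))$ in $X$, so $t\mapsto\|x(t)\|_X^2$ is $C^1$ there and continuous up to $0$. Using $\mathfrak{A}=A_{-1}|_{D(\mathfrak{A})}+B\mathfrak{B}$ and the fact that for $x\in X_{1/2}$ the pairing $\langle A_{-1}x,x\rangle_{X_{-1/2},X_{1/2}}$ equals the closed form $\langle Ax,x\rangle=\mu\|x\|^2-\|(\mu I-A)^{1/2}x\|^2$ (any $\mu>\omega_A$), one obtains
\[\tfrac12\tfrac{\mathrm d}{\mathrm dt}\|x(t)\|^2=\langle Ax(t),x(t)\rangle+\Re\langle Bu(t),x(t)\rangle_{X_{-1/2},X_{1/2}}+\langle f(t,x(t)),x(t)\rangle.\]
Bounding the cross term by (ii) and $\|x\|_{X_{1/2}}\lesssim(\mu\|x\|^2-\langle Ax,x\rangle)^{1/2}$ with Young's inequality, and inserting the dissipativity bound (3), gives, for every small $\varepsilon>0$,
\[\tfrac12\tfrac{\mathrm d}{\mathrm dt}\|x\|^2\le\bigl(1-m_1-\tfrac\varepsilon2\bigr)\langle Ax,x\rangle+\bigl(\tfrac{\varepsilon\mu}{2}+m_2\bigr)\|x\|^2+\tfrac{C}{\varepsilon}\|u\|_U^2.\]
By (4) we fix $\varepsilon\in(0,2(1-m_1))$ small enough that, using $\langle Ax,x\rangle\le\omega_A\|x\|^2$, the resulting coefficient $(1-m_1)\omega_A+m_2+\tfrac\varepsilon2(\mu-\omega_A)$ is still strictly negative, say $\le-\omega_1<0$. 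Hence $\tfrac{\mathrm d}{\mathrm dt}\|x(t)\|^2\le-2\omega_1\|x(t)\|^2+\tfrac{2C}{\varepsilon}\|u(t)\|_U^2$ on $(0,T_{\max})$, so Gronwall yields $\|x(t)\|^2\le \ee^{-2\omega_1 t}\|x_0\|^2+\tfrac{2C}{\varepsilon}\int_0^t \ee^{-2\omega_1(t-s)}\|u(s)\|_U^2\,\mathrm ds$ there.

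\emph{Step 3: global existence and ISS.} The bound of Step 2 keeps $\|x(t)\|_X$, hence $\|z(t)\|_X$, bounded on bounded subintervals of $[0,T_{\max})$; plugging this, the growth bound (2), and $B_0\in\mathcal{L}(U,X_{1/2})$ into the variation-of-constants formula \eqref{eq:mildsolution} for $z$ and using $\|(\mu I-A)^{1/2}T(t-s)\|\lesssim(t-s)^{-1/2}$, a singular Gronwall inequality (Henry's lemma) bounds $\|z(t)\|_{X_{1/2}}$ on bounded subintervals, so the blow-up alternative forces $T_{\max}=\infty$; thus the mild solution is global and classical on $(0,\infty)$. Finally, for $q\ge2$ put $r=q/2\ge1$ and apply H\"older to the convolution, $\int_0^t \ee^{-2\omega_1(t-s)}\|u(s)\|_U^2\,\mathrm ds\le\|\ee^{-2\omega_1\,\cdot}\|_{L^{r'}(0,\infty)}\,\|u\|_{L^q(0,t;U)}^2$, finite since $\omega_1>0$; taking square roots and $\sqrt{a+b}\le\sqrt a+\sqrt b$ gives $\|x(t)\|_X\le \ee^{-\omega_1 t}\|x_0\|_X+C_2\|u\|_{L^q(0,t;U)}$, i.e.\ $L^q$-ISS with $C_1=1$, $\omega=\omega_1$.

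\emph{Expected main obstacle.} The delicate part is making Step 2 rigorous on the interpolation space $X_{1/2}$: identifying $\langle\mathfrak{A}x,x\rangle$ with the closed quadratic form $\langle Ax,x\rangle$ plus the $X_{-1/2}$--$X_{1/2}$ duality pairing $\langle Bu,x\rangle$ (using Propositions~\ref{sec2:prop1}--\ref{prop:weak}), and justifying the differentiation of $t\mapsto\|x(t)\|^2$ only on the open interval where the solution is classical --- and, tightly coupled to it, upgrading the $X$-bound to an $X_{1/2}$-bound to exclude finite-time blow-up. Once these are settled, the ISS estimate is a routine Gronwall-plus-H\"older computation, and the range $q\ge2$ reflects exactly the exponent $\alpha=\tfrac12$ in hypothesis (ii) (cf.\ Proposition~\ref{prop:analyticB}).
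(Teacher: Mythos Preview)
Your proposal is correct and follows essentially the same strategy as the paper: local existence via Pazy's analytic--semilinear theory, the Lyapunov energy inequality from assumptions (3)--(4) on the classical solution, then Gronwall and H\"older for the ISS bound. Your Step~2 and the ISS part of Step~3 are virtually identical to the paper's argument; if anything you are more careful than the paper about interpreting $\langle\mathfrak{A}x,x\rangle$ via the closed quadratic form and the $X_{-1/2}$--$X_{1/2}$ duality.

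The one substantive difference is how local existence is obtained. The paper stays with the original variable and adapts Pazy's proof by adding the boundary contribution $\int_0^t(-A)^{1/2}T(t-s)Bu(s)\,\mathrm ds$ to the auxiliary $y$-equation, integrating by parts and using analyticity to see this term is H\"older on $(0,\infty)$; classicality then follows by combining Pazy's Corollary~4.3.3 for the H\"older forcing $f$ with \cite[Prop.~4.2.10]{TucsWeis09} for the $Bu$-part. You instead apply the Fattorini substitution $z=x-B_0u$ and invoke Pazy's theorems as a black box. This is a perfectly legitimate route, but there is a small slip: $u\in W^{2,1}(\Rp;U)$ gives $\dot u$ only absolutely continuous, not Lipschitz, so the term $-B_0\dot u(t)$ in your $g$ is in general \emph{not} H\"older in $t$, and Pazy's Theorem~6.3.1 does not apply verbatim. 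The fix is easy --- either note that $B_0\dot u(t)\in X_{1/2}$ so the corresponding convolution is classical by analytic smoothing, or split the forcing as the paper does. Finally, observe that global existence already follows from the growth bound~(2) alone via your singular-Gronwall argument (this is exactly Pazy's Theorem~6.3.3); the $X$-bound from Step~2 is not actually needed there, only for the ISS estimate.
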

\begin{proof} First note that --- upon considering $\tilde{\mathfrak{A}}=\mathfrak{A}-\omega_{A}-\epsilon$ and $\tilde{f}(s,x)=f(s,x)+(\omega_A+\epsilon)x$ we can without loss of generality assume that $\omega_{A}<0$ and thus that the semigroup is exponentially stable. \\
In order to show existence and uniqueness of the solutions, we closely follow the proof of the classical result in \cite[Theorem 6.3.1 and Theorem 6.3.3]{Pazy83} which has to be adapted to allow for boundary inputs $u$.  
Under the made assumptions on $A$ and $f$, it follows by \cite[Theorem 6.3.1]{Pazy83}, that the uncontrolled system, $u\equiv0$, has a unique local classical solution for any $x_{0}\in X_{\frac{1}{2}}$, which, by the assumption (2) and \cite[Theorem 6.3.3]{Pazy83}, extends to a global solution. The key argument for local existence \cite[Theorem 6.3.1]{Pazy83} is to consider the unique solution $y$ of 
\begin{equation}\label{eq1:lastthm}
y(t)=T(t)(-A)^{\frac{1}{2}}x_{0}+\int_{0}^{t}(-A)^{\frac{1}{2}}T(t-s)f(s,(-A)^{-\frac{1}{2}}y(s))\mathrm{d}s
\end{equation}
for $t\in[0,\tau]$, where $\tau>0$ and to show that $t\mapsto y(t)$ is H\"older continuous on $(0,\tau)$, so that the seeked solution is given by the solution of
\begin{align}\label{eq2:lastthm}
\dot{x}(t)={}&Ax(t)+f(t,(-A)^{-\frac{1}{2}}y(t)),
\\ x(0)={}&x_{0}.\notag
\end{align}
To apply an analogous reasoning in the controlled case, $u\neq0$, it remains to adapt \eqref{eq1:lastthm} and \eqref{eq2:lastthm} by adding the terms $\int_{0}^{t}(-A)^{\frac{1}{2}}T(t-s)Bu(s)\mathrm{d}s$ and $Bu(t)$ to the right-hand sides, respectively. Since $B\in\mathcal{L}(U;X_{-\frac{1}{2}})$, we have that $\tilde{B}:=(-A)^{-\frac{1}{2}}B\in\mathcal{L}(U;X)$ and thus
\begin{align*}
t\mapsto\int_{0}^{t}(-A)^{\frac{1}{2}}T(t-s)Bu(s)\mathrm{d}s={}&-\int_{0}^{t}AT(t-s)\tilde{B}u(s)\mathrm{d}s\\
={}&-\int_{0}^{t}T(t-s)\tilde{B}\dot{u}(s)\mathrm{d}s+T(t)\tilde{B}u(0)-\tilde{B}u(t)
\end{align*}
is a continuous function on $[0,\infty)$ and, by the analyticity of the semigroup, even H\"older continuous on $(0,\infty)$. Therefore, analogously to the proof of \cite[Theorem 6.3.1]{Pazy83}, we conclude that the equation
\begin{equation}\label{eq3:lastthm}
y(t)=T(t)(-A)^{\frac{1}{2}}x_{0}+\int_{0}^{t}(-A)^{\frac{1}{2}}T(t-s)\left[f(s,(-A)^{-\frac{1}{2}}y(s))+Bu(s)\right]\mathrm{d}s, 
\end{equation}
allows for a unique continuous solution $y:[0,\tau]\to X$ for some $\tau>0$  such that $t\mapsto f(t,(-A)^{-\frac{1}{2}}y(t))$ is H\"older continuous on $(0,\tau)$.
Therefore, and since $u\in W^{2,1}(\Rp;U)$ with $A_{-1}x_{0}+Bu(0)\in X$, the mild solution $x\in C([0,\tau];X)$ of 
\begin{align}\label{eq4:lastthm}
\dot{x}(t)={}&Ax(t)+f(t,(-A)^{-\frac{1}{2}}y(t))+Bu(t),
\\ x(0)={}&x_{0}.\notag
\end{align}
is in fact a classical solution on $(0,\tau)$, \cite[Cor.~4.3.3]{Pazy83} and \cite[Proposition 4.2.10]{TucsWeis09}. From the representation of the mild solution of \eqref{eq4:lastthm},
$$x(t)=T(t)x_{0}+\int_{0}^{t}T(t-s)\left[f(s,(-A)^{-\frac{1}{2}}y(s))+Bu(s)\right]\mathrm{d}s,$$
 it moreover follows that $x(t)=(-A)^{-\frac{1}{2}}y(t)$ and thus, $x$ is a mild solution of the original boundary control problem \eqref{eq2:nonlinearBCS} on $[0,\tau]$ and even a classical solution on $(0,\tau)$. From assumption (4), it follows that $x$ remains bounded in the $\|\cdot\|_{\frac{1}{2}}$-norm on $[0,\tau)$, so that, by iterating the argument, $x$ can be extended to a global solution, see \cite[Theorem 6.3.3]{Pazy83}.\smallskip
 
  We now show the $L^{q}$-ISS estimate. Let $x$ be the mild solution to an initial value $x_{0}\in  X_{\frac{1}{2}}$. Since $x$ is a classical solution on $(0,\infty)$, we have  for any $t>0$ that
%Since $x$ is a classical solution on $(0,\infty)$, we have for any $t>0$ that
\[ \frac{1}{2}\frac{{\rm d}}{{\rm d}t}\|x(t)\|^{2}=\langle Ax(t),x(t)\rangle +\langle f(t,x(t)),x(t)\rangle+\Re\langle u(t),B^{*}x(t)\rangle.\]
%Since $f$ is Lipschitzian, $x$ is a classical solution on $(0,T]$ for $T>0$, hence in particular $x\in C^{1}((0,T];X)\cap C([0,T];X)$. 
Therefore, by Assumption (3) and noting that 
\begin{equation}\label{eq:thm3}
\|x\|_{\frac{1}{2}}^{2}=\langle (-A)^{\frac{1}{2}}x,(-A)^{\frac{1}{2}}x\rangle=-\langle Ax,x\rangle\geq -\omega_{A}\|x\|^{2},
\end{equation}
it follows that for any $t>0$ and sufficiently small $\epsilon>0$
\begin{align}
\frac{1}{2}\frac{{\rm d}}{{\rm d}t}\|x(t)\|^{2}\leq{}& (1-m_{1})\langle Ax(t),x(t)\rangle+m_{2}\|x(t)\|^{2}+|\langle u(t),B^{*}x(t)\rangle_{U\times U}|,\notag\\
\leq{}& (1-m_{1}-\epsilon)\langle Ax(t),x(t)\rangle+m_{2}\|x(t)\|^{2}+\tfrac{1}{4\epsilon}\|B^{*}\|_{\mathcal{L}(X_{\frac{1}{2}},U)}^{2}\|u(t)\|^{2},\notag\\
\leq{}&((1-m_{1}-\epsilon)\omega_{A}+m_{2})\|x(t)\|^{2}+\tfrac{1}{4\epsilon}\|B^{*}\|_{\mathcal{L}(X_{\frac{1}{2}},U)}^{2}\|u(t)\|^{2},\label{eq:thmmain}
\end{align}
where we used \eqref{eq:thm3} and Assumption (4) in the last inequality. 
Gronwall's inequality now yields the assertion for $q=2$ and an additional application of H\"older's inequality the one for $q>2$.
\end{proof}
\begin{rem}
% \begin{enumerate}
%\item 
Theorem \ref{thm:main} is a generalization of the result in \cite{ZhenZhu17} where only the Laplacian with Robin/Neumann boundary control (excluding Dirichlet control) in one spatial variable was considered and the assumptions on $f$ were tuned to guarantee the existence of classical solutions. We decided to give a full proof (or at least a sketch of the necessary adaptions from \cite{Pazy83}) of the existence of solutions for the convenience of the reader, but also since the classical literature on semilinear pde's does not cover the presence of the inputs\footnote{at least the author is not aware of any explicit reference in this operator theoretic framework.}. The assumption that the inputs should lie $W^{2,1}(\R)$ with the additional property that $A_{-1}x_{0}+Bu(0)\in X$ is clearly tuned in order to guarantee for classical solutions (in $(0,\infty)$), cf.\cite[Prop.4.2.10]{TucsWeis09}. This, however, can be weakened with a more careful analysis on the regularity of the solutions and by deriving \eqref{eq:thmmain} only for almost every $t>0$. 
Although our proof follows standard arguments in the semigroup approach to semilinear equations instead, the derivation of the ISS estimate can be seen as abstraction of the procedure in \cite{ZhenZhu17}. 
%\item 
Recall that it is well-known that the corresponding boundary operator $B$ in the situation of Neumann or Robin control in \cite{ZhenZhu17} satisfies the condition $B\in\mathcal{L}(U, X_{-\frac{1}{2}})$, see also Example \ref{ex3:heatNeumann}.
%\item 
%\end{enumerate}
\end{rem}

\begin{example}[Semilinear parabolic equation with cubic nonlinearity]
\label{ex:semilinear}
 Let $\Omega\subset\R^{n}$ with $n\in\{1,2,3\}$. Under the setting of Example \ref{ex3:heatNeumann} consider
\begin{align*}
	\dot{x}(\xi,t)={}&\Delta x(\xi,t)-ax(\xi,t)-x(\xi,t)^{3}+d(\xi,t),&(\xi,t)\in\phantom{\partial}\Omega\times (0,\infty)\\
	\frac{\partial x}{\partial \nu}(\xi,t)={}&u(\xi,t)&(\xi,t)\in\partial \Omega\times (0,\infty)\\
	x(\xi,0)={}&x_{0}(\xi), &\xi\in\phantom{\partial}\Omega\phantom{\times (0,\infty)}
\end{align*}
which establishes a semilinear BCS $(\mathfrak{A},\mathfrak{B},f)$ with $f(x)=-x^{3}$ and the same operators $\mathfrak{A},\mathfrak{B}$, $A$, $B$ as in Example \ref{ex3:heatNeumann}. As seen in the previous example, $(\mathfrak{A},\mathfrak{B})$ is a linear boundary control system for $d=0$ and, in the generalized sense of Remark \ref{rem1}, for $d\neq0$. The conditions (i) and (ii) of Theorem \ref{thm:main} are satisfied with $\omega_{A}=-a$. Conditions (1) and (2) both follow from the Sobolev embedding $W^{1,2}(\Omega)\subseteq L^{6}(\Omega)$ valid for $n\in\{1,2,3\}$, see e.g.\ \cite{Adams}, and the fact that $X_{\frac{1}{2}}=W^{1,2}(\Omega)$, see e.g.\ \cite{LasiTrigI00}.

\end{example}

\section{Concluding remarks \& Outlook}
In the situation of Dirichlet boundary control and the choice $X=L^{2}(\Omega)$ for the state space, it is well-known that an $L^{2}$-ISS-estimate (in time) cannot be expected. More precisely, even for a linear heat equation the input operator represented by Dirichlet boundary control is not $L^{2}$-admissible if the state space is $L^{2}(\Omega)$, see \cite[p.~217]{Lion68} for a counterexample. Instead, as we have seen in Example \ref{ex:dirichlet}, we only have $L^{p}$-ISS for $p>4$ in general, see also \cite[Proposition 5.1]{FabrPuelZuaz95} for another proof in the case that $p=\infty$. Therefore, the results of Section \ref{sec4} cannot be applied and the situation becomes more evolved. The question is if Lyapunov arguments such as used in Theorem \ref{thm:main} can at all be used to assess ISS in situations which are not $L^{2}$-ISS. A work-around --- typical in the theory of linear $L^2$-wellposed systems \cite{TucsWeis09}--- is as follows: If in the setting of Example \ref{ex:semilinear} one considers Dirichlet boundary control instead of Neumann boundary control, we could change the considered state space $X$ to be the Sobolev space $H^{-1}(\Omega)$ in order to obtain $L^{2}$-ISS, i.e.\ 
\begin{equation*}
\|x(t)\|_{H^{-1}(\Omega)}\lesssim \ee^{-a t}\|x_{0}\|_{H^{-1}(\Omega)}+\|u\|_{L^{2}(0,t;L^{2}(\partial\Omega))}.
\end{equation*} 
%After having established ISS with respect to this state space, one can try to improve on the state space norm by other arguments
On the other hand, if we aim for $L^{\infty}$-ISS estimates only, other techniques may be more suitable; such as the maximum principle methods in \cite{KaraKrstMiro19}.
These methods, however, seem to be practical only for $L^{\infty}$-ISS estimates.

\section*{Acknowledgements}
The author would like to thank Marc Puche (Hamburg), Andrii Mironchenko (Passau) and Hans Zwart (Twente) for fruitful discussions on semilinear equations. He is also grateful to the anonymous referee for the careful reading of the manuscript. Finally he is indebted to Hafida Laasri, Joachim Kerner and Delio Mugnolo (Hagen) for organizing the inspiring workshop {\it Control theory of infinite-dimensional systems} in Hagen, Germany, in January 2018, from which the idea for this article originated.

% \bibliographystyle{abbrv}
 %\bibliography{2019_ref}

\def\cprime{$'$}

\end{document}